\newcommand {\bv}{{\bf{v}}}
\newcommand {\bk}{{\bf{k}}}
\newcommand {\lt}{\left}
\newcommand {\tum}{\textup{mod}}
\newcommand {\rt}{\right}
\newcommand {\br}{\bf{R}}
\newcommand {\rk}{r_{{\bf{k}}}}
\newcommand {\tk}{\theta_{{\bf{k}}}}
\newcommand {\Thk}{\Theta_{{\bf{k}}}}
\newcommand {\pk}{\psi_{{\bf{k}}}}
\newcommand {\mk}{{\mathcal{M}_{\bf{k}}}}
\newcommand {\Sk}{{\mathcal{S}_{\bf{k}}}}
\newcommand {\ak}{{\mathcal{A}_{\bf{k}}}}
\newcommand {\al}{{\mathcal{A}_\lambda}}
\newcommand {\Bk}{{\mathcal{B}_{\bf{k}}}}
\newcommand {\Bl}{\mathcal{B}_\lambda}
\newcommand {\Ck}{{\mathcal{C}_{\bf{k}}}}
\newcommand {\Dk}{{\mathcal{D}_{\bf{k}}}}
\newcommand {\gk}{\gamma(\bf{k}^2)}
\newcommand {\nk}{N(\bf{k}^2)}
\newcommand {\tho}{\theta_0}
\newcommand {\tn}{t_\bk}
\newcommand {\str}{\stackrel{\mathcal{L}}{=}}
\newcommand {\Ht}{H^{\theta_0}}
\newtheorem{cor}{Corollary}[section]
\newtheorem{thm}{Theorem}[section]
\newtheorem{lemma}{Lemma}[section]
\newtheorem{remark}{Remark}[section]
\begin{document}

\title{\bf Heavy tails and one-dimensional localization.}

\author{ M.  Cranston , \, S. Molchanov,\,   N. Squartini} 
\maketitle
\footnote{Research of the authors supported in part by a 
grant from NSF.} 
\begin{abstract}
We address the fundamental questions concerning the operator 
\begin{eqnarray*}
H^{\theta_0}\psi(x)=-\psi''(x)+V(x,\omega)\psi(x),\,\psi(0)\cos\theta_0-\psi'(0)\sin\theta_0=0.
\end{eqnarray*}
where the random potential $V$ has a variety of forms. In one example, it is composed of width one bumps of random heights where the square root of the heights are in the domain of attraction of a stable law with index $\alpha\in(0,1)$ or in another it is composed of width one bumps of height one where the distance  between bumps is in the domain of attraction of a stable law with index $\alpha\in(0,1).$ We consider the existence of Lyapunov exponents, integrated density of states and the nature of the spectrum of the operator.\\

\noindent{\it Keywords:} random Schr\"{o}dinger operator, Lyapunov exponent, rotation number, integrated density of states.\\

	\noindent{\it \noindent AMS 2010 subject classification:}
	Primary: 47B80, 35P05  Secondary: , 60H25 
\end{abstract}

\section{Introduction}

In this paper we address a question posed several years ago by G. Zaslovski: what is the effect of heavy tails of one-dimensional random potentials on the standard objects of localization theory:  Lyapunov exponents, density of states, statistics of eigenvalues, etc. ? Professor G. Zaslovski always expressed a special interest in the models of chaos containing strong fluctuations, e.g. L\'{e}vy flights. We'll consider several models of potentials constructed by the use of $iid$ random variables which belong to the domain of attraction of the stable distribution with parameter $\alpha<1.$ In order to put  our results in context, we'll recall the "regular theory" as presented in Carmona-Lacroix \cite{CL} or Figotin-Pastur  \cite{FP}. Consider the one-dimensional Schr\"{o}dinger operator on the half line with boundary condition: 
\begin{eqnarray}\label{seqn}
H^{\theta_0}\psi(x)=-\psi''(x)+V(x,\omega)\psi(x),\,\psi(0)\cos\theta_0-\psi'(0)\sin\theta_0=0.
\end{eqnarray}
where for each $x\in [0,\infty),\,V(x,\cdot)$ is a random variable on a basic probability space $(\Omega,\mathcal{F}, P)$ and $\theta_0\in[0,\pi]$ is fixed. Our potentials $V(x,\omega)$ will be piecewise constant, these are the so-called Kr\"{o}nig-Penny type potentials.

In place of the energy parameter $\lambda,$ we'll sometimes work with the frequency ${\bf{k}}=\sqrt{|\lambda|}.$ Denote the solution of $H^{\theta_0}\psi=\bk^2\psi$ by  $\pk.$ For the solution $\psi_{\bk}$ of this equation
introduce the phase, $\tk,$  and magnitude, $\rk,$  by the Pr\"ufer formulas:
\begin{eqnarray}\label{prufer}
\begin{split}
\pk(x)=&\rk (x)\sin\tk(x)\\
\pk'(x)=&{\bk} \rk(x)\cos\tk(x).
\end{split}
\end{eqnarray}
Then the equation 
\begin{eqnarray}
-\psi_\bk''(x)+V(x,\omega)\psi_\bk(x)=\bk^2\psi_\bk(x)
\end{eqnarray}
can be written, using 
\[Y_\bk(x)=\left(\begin{array}{ll}
\pk(x)\\
\bk^{-1}\pk'(x)
\end{array}\right)\]
and
\[A_\bk(x)=\left(\begin{array}{ll}
0&\bk\\
\frac{V(x)-\bk^2}{\bk}&0
\end{array}\right)\]
in the form $Y'_\bk(x)=A_\bk(x)Y_\bk(x).$ Setting 
$$\Theta_\bk(x)=Y_\bk(x)||Y_\bk(x)||^{-1}$$ and noting
 \[\frac12 (r_\bk^2(x))'=\lt<\Theta_\bk(x),A_\bk(x)\Theta_\bk(x)\rt>r_\bk(x)\]
 and combining this with  (\ref{seqn}), one easily derives the standard Ricatti equations for the phase and magnitude:
\begin{eqnarray}
\begin{split}\label{ricatti}
\tk'(x)=&{\bk}-{\frac{1}{{\bk}}}V(x,\omega)\sin^2\tk(x)\\
\rk'(x)=&\frac12 \rk(x)V(x,\omega)\sin 2\tk(x).
\end{split}
\end{eqnarray}
The monodromy operator, which is in $SL(2,\br),$ denoted by $\mk,$ transforms the values of the solution as follows:
for $a<b,$
\begin{eqnarray}
\begin{split}\label{transfer}
\mk([a,b])\left(\begin{array}{ll}
\pk(a)\\
\bk^{-1}\pk'(a)
\end{array}\right)=&\left(\begin{array}{ll}
\pk(b)\\
\bk^{-1}\pk'(b)
\end{array}\right).
\end{split}
\end{eqnarray}
We'll use the notation $\mk(n)\equiv \mk([n-1,n]),\,n=1,2,....$ 

The standard assumptions on $\mk$ are
\begin{itemize}
\item For all $n,\,E\lt[\ln^+ ||\mk(n)||\rt]<\infty.$
\item The sequence $\{\mk(n):n\ge 1\}$ is either independent or forms a stationary sequence with rapidly decaying correlations.
\item  The random variables $\mk(n)$ have a density in $SL(2,\br)$ with respect to Haar measure, alternatively, the sequence of phases $\{\tk(n):n\ge1\}$ defined by 
\[(\sin \tk(n+1),\cos \tk(n+1))\equiv \Thk(n+1) =\frac{\mk(n)\Thk(n)}{||\mk(n)\Thk(n)||}\]
satisfies the D\"{o}blin condition. That is the Markov chain $\{\tk(n):n\ge1\}$ has a "good" transition density $p_\bk(\theta,\eta).$
 \end{itemize}
Under these assumptions, there exists a continuous and strictly  positive Lyapunov exponent
\[\gk=\lim_{n\to\infty}\frac1n\ln ||\Pi_{j=1}^n\mk(j)||, \,a.s.\]
and an integrated density of states
\[\nk=\lim_{n\to\infty}\frac{\tk(n)}{\pi n},\,\, a.s..\]

The integrated density of states and Lyapunov exponent are related by the Thouless formula, see e.g. \cite{CL},
\[\gk=\gamma_0(\bk^2)+\int_{\br}\ln |\bk^2-u|(N(du)-N_0(du))\]
where $\gamma_0(u)=\sqrt{u}$ and $\bf{N_0}(u)=\frac{\sqrt{u}}{\pi}$ are the Lyapunov exponent and integrated density of states for the operator $H_0=-\frac{d^2}{dx^2}.$ Finally, for a.e $\theta_0$ with respect to Lebesgue measure,  the spectrum of $\Ht$ is pure point  $a.s.$ and the eigenfunction corresponding to the eigenvalue $\bk^2$ is exponentially decreasing at rate $\gamma(\bk^2)$ which is referred to as the exponential localization theorem. Typical references for these results would be \cite{D},  \cite{DLS}, \cite{K}, \cite{SW}, and an overview of the field is contained in the lecture notes \cite{M}. 

In contrast to the preceding classical situation, we'll show that for heavy-tailed potentials, of Kr\"{o}nig-Penny type, (meaning the tail of the distribution of the random potential decays slowly) the limits defining  either $\nk$ or $\gk$ do not exist under the usual normalization. However, after appropriate non-linear normalization, they converge in distribution to non-degenerate random variables. Under appropriate assumptions on the tails, these random variables have a stable distribution. The underlying phenomenon is related to Darling's Theorem, \cite{DD},  on  the contribution of the maximal term to a sum of random variables in the domain of attraction of a stable law with index $\alpha\in(0,1).$ Namely, if $\{\zeta_n\}_{n\ge1}$ are iid, nonnegative  random variables with tails given by $P(\zeta_1>x)=\frac{L(x)}{x^\alpha}$ where $L$ is a slowly varying function, $S_n=\sum_{j=1}^n\zeta_j$ and $\zeta^*_n=\max_{1\le j\le n}\zeta_j$ then the ratio $\zeta_n^*/S_n$ has a nondegenerate limiting law. The quantities we examine have the form of $S_n$ with summands in the domain of attraction of a stable law with index $\alpha\in(0,1).$ Such sums can not be normalized to converge $a.s.$ to a nonzero deterministic constant since from time to time a new summand will have the same order of magnitude as the entire sum. This prevents the usual self averaging we see in the classical case of ergodic potentials. It's important to emphasize that our potentials are not ergodic, so the usual theorems that apply to Schr\"{o}dinger operators with ergodic potentials do not apply in our models. For example, in ${\bf{Model \,III}}$ we have a Lyapunov exponent which is identically zero. If the potential were ergodic, this would imply the spectrum is a.s. absolutely continuous, see \cite{D}, Theorem $4.$ In our model we find the spectrum ${\bf{Model \,III}}$ is a.s. pure point in spite of having a vanishing Lyapunov exponent. We now give a description of four models that will be covered in this paper and the interesting effects that they display.

$${\bf{Model\,I}}$$
In the first model,  $V$ has the form
\begin{eqnarray}
V(x,\omega)=\sum_{n=0}^\infty 1_{[n,n+1)}(x)X_n(\omega)
\end{eqnarray}
where $\{X_n:n\ge0\}$ are $iid$ random variables with common density $p.$ This density will be assumed to be bounded, continuous and satisfy $p(x)>0$ for $x>0$ and vanishing identically for $x\le 0.$ Moreover, we shall assume that $\sqrt{X_n}$ belongs to the domain of attraction of an $\alpha$-stable  law, denoted $St_\alpha,$ where $0<\alpha<1.$ This assumption is equivalent to requiring that $P(\sqrt{X_n}>x)= \frac{L(x)}{x^\alpha}, \,x\to \infty,$  with $L$ a slowly varying function. In ${\bf{Model\,I}}$ we will show that for a.e. $\theta_0,\,P\,\,a.s.,$ the spectrum of $H^{\theta_0}$ is pure-point and the eigenfunctions decay super-exponentially. Also, it turns out that a properly normalized integrated density of states $\nk$ exists and is continuous in spite of the fact that $E[\ln||\mk(n)||]=\infty.$ Perhaps the most interesting result here concerns the Lyapunov exponent. We'll demonstrate that
\[\ln||\mk([0,n])||=\ln ||\mk(1)\mk(2)\cdots\mk(n)||\sim \sum_{j=1}^n\sqrt{X_j}.\]
As a consequence, we'll have
\[\lim_{n\to\infty}\frac{1}{n}\ln||\mk([0,n])||=\infty, \,P-a.s.,\]
whereas
\[\lim_{n\to\infty}\frac{1}{n^{1/\alpha}}\ln||\mk([0,n])||\stackrel{\mathcal{L}}{=}\zeta_\alpha, \]
where $\zeta_\alpha$ has an $St_\alpha$ distribution, where $\mathcal{L}$ is used to indicate convergence in law.
The convergence in distribution can not be improved to $a.s.$ since
\[\zeta(n)\equiv\frac{\ln||\mk([0,n])||}{n^{1/\alpha}}\]
randomly oscillates as $n\to\infty$ while its distribution is tending to the $St_\alpha$ law. This can be seen by the decomposition
\begin{eqnarray}
\begin{split}
\zeta(2^{n+1})\sim &2^{-(n+1)/\alpha}\sum_{j=1}^{2^{n}}\sqrt{X_j}+2^{-(n+1)/\alpha}\sum_{j=2^n+1}^{2^{n+1}}\sqrt{X_j}\\
\sim& 2^{-1/\alpha}(\zeta(2^{n})+\tilde\zeta(2^{n}))
\end{split}
\end{eqnarray}
where $\zeta(2^{n})$ and $\tilde\zeta(2^n)$ are independent and nearly $St_\alpha$ distributed. Thus, $\zeta(2^{n+1})-\zeta(2^{n})= (2^{-1/\alpha}-1)\zeta(2^{n})+2^{-1/\alpha}\tilde\zeta(2^{n})$ and so $a.s.$ convergence does not hold in this situation.
The results are summarized in the following theorem. 

\begin{thm}\label{MI}
In ${\bf{Model\,I}},$ for any $\bk^2,$

the standard integrated density of states exists and is given by
\[{\nk}=\lim_{n\to\infty}\frac{\tk(n)}{\pi n},\,\,P-a.s..\]

In addition, the linear scale Lyapunov exponent is infinite, i.e.
\[\gamma(\bk^2)=\lim_{n\to\infty}\frac{1}{n}\ln||\mk([0,n])||=\infty, \,\,P-a.s.,\]
whereas in the nonlinear scale, we have convergence in distribution,
\[\gamma_{\bf{nl}}(\bk^2)=\lim_{n\to\infty}\frac{1}{n^{1/\alpha}}\ln||\mk([0,n])||\stackrel{\mathcal{L}}{=}\zeta_\alpha, \]
where $\zeta_\alpha$ has an $St_\alpha$ distribution.

For a.e. $\tho\in [0,\pi],$ the operator $\Ht$ has pure point spectrum a.s.. In addition, the eigenfunctions satisfy
\[\lim_{x\to\infty} \frac{1}{2 x^{1/\alpha}}\ln \lt(\pk(x)^2+\frac{1}{\bk^2}\pk'(x)^2\rt)\str-\xi_\alpha.\]
\end{thm}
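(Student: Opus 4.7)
The plan is to use the explicit single-step transfer matrix on each bump. On $[j-1,j]$ the potential is the constant $X_j$; for $X_j>\bk^2$, setting $\tau_j=\sqrt{X_j-\bk^2}$ gives
\[
\mk(j) = \begin{pmatrix} \cosh\tau_j & (\bk/\tau_j)\sinh\tau_j \\ (\tau_j/\bk)\sinh\tau_j & \cosh\tau_j \end{pmatrix},
\]
so $\ln\|\mk(j)\| = \sqrt{X_j}(1+o(1))$ as $X_j\to\infty$. Because $\sqrt{X_j}$ lies in the $\alpha$-stable domain of attraction with $\alpha<1$, the hyperbolic intervals are typical and drive the asymptotics, while the bounded oscillatory pieces ($X_j<\bk^2$) contribute only a negligible $O(n)$ to every quantity of interest.

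The key analytic estimate I would establish is
\[
\ln\|\mk([0,n])\| = \sum_{j=1}^n \sqrt{X_j} + o(n^{1/\alpha}).
\]
The upper bound is immediate from submultiplicativity, using $\sum_j\ln^+ X_j = O(n\ln n) = o(n^{1/\alpha})$. For the matching lower bound I would invoke Darling's theorem \cite{DD}: the sum $\sum_j\sqrt{X_j}$ is carried by the record $\sqrt{X_{j^*}}$, where $j^*$ is the position of the largest $X_j$ in $[1,n]$. If $\Thk(j^*-1)$ is bounded away from the contracting eigendirection of $\mk(j^*)$, then $\|\mk(j^*)\Thk(j^*-1)\| \ge c\,\|\mk(j^*)\|$; by the symmetry of the record position in the iid sequence and the smoothing of the Prüfer Markov chain by the continuous density $p$, this angle-avoidance event has probability tending to $1$. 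The non-record factors can distort the amplitude by at most $\exp(\sum_{j\ne j^*}\sqrt{X_j}) = \exp(o(\sqrt{X_{j^*}}))$. Darling's theorem then yields $n^{-1/\alpha}\ln\|\mk([0,n])\|\str\zeta_\alpha\sim St_\alpha$; since $E[\sqrt{X_1}]=\infty$ forces $n^{-1}\sum_j\sqrt{X_j}\to\infty$ a.s., the linear-scale limit is $+\infty$ a.s. (the $\liminf$ requires the additional observation that $\ln\|M_n\|$ loses at most $\ln\|\mk(n+1)\|$ per step in $SL(2,\br)$, which is dwarfed by the record increments). The analogous amplitude identity $\ln\rk(x) = \sum_{j\le x}\sqrt{X_j}(1+o(1))$ gives the super-exponential asymptotic $(2x^{1/\alpha})^{-1}\ln(\pk^2+\bk^{-2}\pk'^2)\str -\xi_\alpha$ along the contracting direction selected by the eigenfunctions.

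For the integrated density of states the Ricatti equation $\tk' = \bk - \bk^{-1}V\sin^2\tk$ shows that the increment $\Delta_j := \tk(j)-\tk(j-1)$ is a uniformly bounded function $F(\tk(j-1)\bmod\pi,\, X_j)$: in the oscillatory regime the phase turns by at most $\bk$, while in the hyperbolic regime it is pulled toward the stable direction and moves by at most $\pi$. The reduced chain $(\tk(n)\bmod\pi)_{n\ge 0}$ is Markov with transition kernel smoothed by the continuous positive density $p$, hence has a unique invariant measure $\nu_\bk$. Birkhoff's theorem applied to the bounded functional $F$ produces the a.s.\ limit $\tk(n)/(\pi n)\to \nk := \pi^{-1}\iint F(\theta,x)\,\nu_\bk(d\theta)\,p(x)\,dx$, with continuity in $\bk$ inherited from the continuous dependence of $\mk(j)$ on $\bk$.

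For pure point spectrum for a.e.\ $\tho$, I would combine the above with a Sch'nol--Berezanskii argument. At every $\bk^2$ the two-dimensional solution space decomposes into a super-exponentially decaying subspace at $+\infty$ and its complement of super-exponentially growing solutions. Sch'nol's theorem says the spectral measure of $\Htho$ is concentrated on those $\bk^2$ that admit a polynomially bounded generalized eigenfunction, which by the growth dichotomy above must lie in the decaying subspace and hence in $L^2$. Therefore the spectrum is pure point, and Simon--Wolff spectral averaging over $\tho$ rules out any singular continuous component uniformly. The main obstacle throughout the argument is the lower bound in the Lyapunov asymptotic: the non-ergodic and heavy-tailed setting falls outside the scope of the classical Furstenberg/Virtser positivity theorems, so one must directly control the distribution of the Prüfer angle immediately before the dominant record bump.
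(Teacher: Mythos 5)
Your proposal diverges from the paper's proof in the Lyapunov-exponent argument, and that divergence contains a genuine gap.

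The paper establishes the nonlinear-scale limit by writing $\ln\|\mk([0,n])\bv\|=\sum_{j=0}^{n-1}f(\Sk(j))$ with $f(x,\theta)=\ln\|\ak(x)\theta\|$, where $\Sk(n)=(X_{n+1},\tilde\theta_\bk(n))$ is a Markov chain with explicit stationary measure $\pi_\bk(dx,d\theta)=p(x)\mu_\bk(d\theta)dx$. It splits the sum into an ``elliptic'' part $II_n$ (handled by Furstenberg, negligible after division by $n^{1/\alpha}$) and a ``hyperbolic'' part $I_n$, verifies the tail condition $\pi_\bk(f>z)\sim cz^{-\alpha}$ via an explicit computation with the cone $E_x$, and then invokes the Jara--Komorowski--Olla limit theorem for additive functionals of Markov chains. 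Your route --- submultiplicative upper bound plus a lower bound built around the single record $j^*$ --- is structurally different, and the lower bound does not go through. Specifically, you assert $\sum_{j\neq j^*}\sqrt{X_j}=\exp\bigl(o(\sqrt{X_{j^*}})\bigr)$, i.e.\ $\sum_{j\neq j^*}\sqrt{X_j}=o(\sqrt{X_{j^*}})$. This is false, and in fact it contradicts the Darling theorem that you cite: for $\alpha\in(0,1)$ the ratio $\zeta_n^*/S_n$ has a \emph{nondegenerate} limit in $(0,1)$, so the non-record mass $S_n-\zeta_n^*$ is of the same order $n^{1/\alpha}$ as $\zeta_n^*$, not of smaller order. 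Consequently your lower bound only yields $\ln\|\mk([0,n])\|\gtrsim \sqrt{X_{j^*}}-\sum_{j\neq j^*}\sqrt{X_j}-O(n)$, which after normalization by $n^{1/\alpha}$ does not pinch against the upper bound $\sum_j\sqrt{X_j}$; the two caps converge to a Fréchet-type maximum functional and to a one-sided $St_\alpha$ random variable respectively, and these do not coincide. To close the gap one must show that \emph{each} hyperbolic step contributes positively (i.e.\ that the Pr\"ufer angle before each large bump is typically bounded away from the contracting eigendirection), which is precisely what the Harris-recurrence/stationary-measure analysis plus the JKO additive-functional theorem deliver in the paper and what a single-record argument cannot.

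The remaining pieces are in better shape but track the paper's structure more or less exactly: your IDS argument (Markov chain on $\tilde\theta_\bk$ smoothed by the density $p$, bounded increment functional, Birkhoff) is the paper's Harris-recurrence proof in lighter clothing, and your localization argument is a standard Sch'nol plus spectral-averaging combination in place of the paper's Lemma $3.1$/Corollary $3.2$ (Borel--Cantelli on the intervals $[n^2,(n+1)^2]$ to produce tall bumps) followed by Kotani's theorem. Either route is fine, but in both cases the localization conclusion leans on the amplitude growth estimate, so the flaw above propagates there as well.
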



$${\bf{Model\,II}}$$
This model is the same as ${\bf{Model\,I}}$ except now the potential is non-positive. We take 
\begin{eqnarray}
V(x,\omega)=-\sum_{n=0}^\infty 1_{[n,n+1)}(x)X_n(\omega)
\end{eqnarray}
with the same conditions on the distribution of $X_n$ as in ${\bf{Model\,I}},$ namely,  the random variables $X_n$ have common density $p$ which is bounded, continuous and satisfies $p(x)>0$ for $x>0,\,p(x)\equiv 0$ for $x\le 0$ and $P(\sqrt{X_n}>x)= \frac{L(x)}{x^\alpha},$  where $L$ is a slowly varying function.
The corresponding operator is essentially self-adjoint without any assumptions on the tails of $X_n.$ This last fact can't be proven by appealing to Weyl's criterion (which requires the condition $V(x)\ge-c_0-c_1x^2,\,c_0>0,c_1>0$ as can be found in \cite{LS}.) However, it does follow from a Theorem of Hartman which is stated below. In this model we'll switch back to $\lambda$ instead of $\bk^2$ since there is spectrum on both sides of the origin in this case. The Lyapunov exponent $\gamma(\lambda)$ is positive and continuous (at least in the situation of ${\bf{Model\,I}}$ when the tails aren't too heavy.) However, in   ${\bf{Model\,II}}$ the density of states $N(\lambda)$ demonstrates unusual behavior:
\begin{thm}\label{MII}
In ${\bf{Model\,II}},$ for any $\lambda,$
\begin{eqnarray}
\theta_\lambda(n)=\sum_{j=0,\,X_j\ge -\lambda}^n\sqrt{X_j+\lambda}+O(n), \,\,\,P-a.s..
\end{eqnarray}
Consequently,
\[\lim_{n\to\infty}\frac{\theta_\lambda(n)}{\pi\,n}=\infty,\,\,\,P-a.s.,\]
but
\[\lim_{n\to\infty}\frac{\theta_\lambda(n)}{\pi\,n^{1/\alpha}}\stackrel{\mathcal{L}}{=}\zeta_\alpha,\]
where $\zeta_\alpha$ has an $St_\alpha$ distribution.
Finally,
\begin{eqnarray}
\gamma(\lambda)=\lim_{n\to\infty} \frac{1}{n}\ln ||\mathcal{M}_\lambda([0,n])||,\,\,P\,\,a.s.
\end{eqnarray}
and $\gamma(\lambda)>0.$
Consequently, $H^{\theta_0}$ has pure point spectrum for $a.e.\, \theta_0.$ In addition, the eigenfunctions satisfy
\[\lim_{n\to\infty} \frac{1}{ x}\ln r_\lambda(x)=\lim_{x\to\infty} \frac{1}{2 x}\ln \lt(\psi_\lambda(x)^2+\frac{1}{\lambda}\psi_\lambda'(x)^2\rt)=-\gamma(\lambda).\]

\end{thm}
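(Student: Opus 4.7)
The plan is to address the four claims of Theorem \ref{MII} in order: (i) the piecewise formula for $\theta_\lambda(n)$, (ii) its a.s.\ divergence on the linear scale together with a distributional stable limit on the $n^{1/\alpha}$ scale, (iii) a.s.\ existence and positivity of $\gamma(\lambda)$, and (iv) pure-point spectrum with the stated exponential decay.

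For (i), analyze the equation on each unit interval. On $[j,j+1)$ the problem reduces to $\psi''+(X_j+\lambda)\psi=0$. When $X_j\ge -\lambda$, set $\omega_j=\sqrt{X_j+\lambda}$ and introduce the local Pr\"ufer phase built on the frequency $\omega_j$; over the interval that phase advances by exactly $\omega_j$, and converting back to the global Pr\"ufer phase (built on $\bk=\sqrt{|\lambda|}$) introduces an $O(1)$ discrepancy from the change of frame. When $X_j<-\lambda$ (only relevant for $\lambda<0$, where $X_j\in[0,|\lambda|)$ is bounded), the ODE coefficients are uniformly bounded and the global phase increment is again $O(1)$ with a constant depending only on $\lambda$. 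Summing over $j$ yields (i). For (ii), note that $\sqrt{X_1+\lambda}\,\mathbf{1}_{\{X_1\ge -\lambda\}}$ belongs to $\mathcal{D}(St_\alpha)$ with the same index $\alpha$ as $\sqrt{X_1}$ (the difference is bounded, and on large $X_1$ it is $O(X_1^{-1/2})$), so $\bE[\sqrt{X_1+\lambda}\mathbf{1}_{\{X_1\ge -\lambda\}}]=\infty$ implies $n^{-1}\theta_\lambda(n)\to\infty$ by the Kolmogorov SLLN for non-negative summands, while the stable limit theorem combined with $n^{-1/\alpha}\cdot O(n)\to 0$ for $\alpha<1$ gives the claimed limit in law after absorbing a slowly varying prefactor into the normalizing constant.

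For (iii), the potential is genuinely $\mathbb{Z}$-ergodic (i.i.d.\ under the unit shift), so Furstenberg--Kesten gives a.s.\ existence of $\gamma(\lambda)$ once $\bE[\ln^+\|\ml(1)\|]<\infty$. The monodromy on an oscillatory interval is $\ml(j)=\bigl(\begin{smallmatrix}\cos\omega_j & (\bk/\omega_j)\sin\omega_j\\ -(\omega_j/\bk)\sin\omega_j & \cos\omega_j\end{smallmatrix}\bigr)$ with norm $O(1+\omega_j/\bk)\le O(1+\sqrt{X_j}/\bk)$; on a hyperbolic interval $X_j$ is bounded so $\|\ml(j)\|\le C_\lambda$. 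Hence $\ln^+\|\ml(j)\|\le C_\lambda+\tfrac12\ln^+ X_j$, and $\bE[\ln^+ X_1]<\infty$ since $\sqrt{X_1}\in\mathcal{D}(St_\alpha)$. Positivity of $\gamma(\lambda)$ would follow from Furstenberg's theorem applied to the law of $\ml(1)$: the continuous density of $X_1$ on $(0,\infty)$ makes the generated semigroup non-compact and strongly irreducible in $SL(2,\br)$ (unbounded dilation ratios $\omega_j/\bk$ and distinct realizations giving matrices without a common invariant direction). Statement (iv) then follows from the standard localization machinery (Kotani--Simon--Wolff / Carmona--Lacroix): since the potential is ergodic with finite-log-moment transfer matrices and $\gamma(\lambda)>0$, spectral averaging over $\tho$ and the subordinacy criterion deliver a.s.\ pure-point spectrum for a.e.\ $\tho$, and the decaying eigenfunction lies in the Oseledec subspace with exponent $-\gamma(\lambda)$, giving the asserted a.s.\ limit for $x^{-1}\ln r_\lambda(x)$.

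The main obstacle I expect is verifying positivity of $\gamma(\lambda)$ in the purely oscillatory regime $\lambda\ge 0$, where every $\ml(j)$ has trace $2\cos\omega_j\in[-2,2]$: there the Furstenberg conditions must be checked by exploiting the non-unitary scaling $\omega_j/\bk\ne 1$ inside $\ml(j)$ together with the continuous law of $X_j$. This is the analytic ingredient specific to Model II; the remainder of the proof is driven by (i) and standard ergodic/subordinacy tools.
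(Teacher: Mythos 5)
Your proof takes essentially the same route as the paper: phase analysis on each unit interval, stable-law convergence of $\sum\sqrt{X_j+\lambda}$ (with the $O(n)$ correction killed by $\alpha<1$), Furstenberg for existence and positivity of $\gamma(\lambda)$, and Kotani for pure-point spectrum and the eigenfunction decay rate. You are in fact slightly more careful than the paper in one spot worth noting: the paper asserts the exact recursion $\theta_\lambda(n+1)=\theta_\lambda(n)+\sqrt{\lambda+X_n}$ and hence an exact identity $\theta_\lambda(n)=\theta_\lambda(0)+\sum_l\sqrt{\lambda+X_l}$, but since $\Bl(X_l)$ is a rotation only after conjugation by $\mathrm{diag}\big(\sqrt{\omega_l/\bk},\sqrt{\bk/\omega_l}\big)$ with $\omega_l=\sqrt{\lambda+X_l}$, the advance in the global Pr\"ufer phase built on $\bk=\sqrt{|\lambda|}$ differs from $\omega_l$ by an $O(1)$ frame-change term, and combined with your separate treatment of the (uniformly bounded) hyperbolic intervals when $\lambda<0$ this is precisely what produces the $O(n)$ in the theorem statement.
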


$${\bf{Model\,\,\,III}}$$
For this model, $\{Y_n:n\ge1\}$ are $iid$ random variables with common density $p$ which is bounded, satisfies $p(x)>0$ for $x>0$ and $P(Y_1>x)=\frac{L(x)}{x^\alpha},\,\alpha\in(0,1),$ for some slowly varying function $L.$
Set 
\begin{eqnarray}\label{eln}
L_n=S_n+n=\sum_{j=1}^nY_j+n
\end{eqnarray}
and define 
\[V(x,\omega)=\sum_{j=1}^\infty {\bf{1}}_{[L_j(\omega)-1,\,L_j(\omega)]}(x).\]
This potential is a system of bumps of height  
$1$ and width $1$ with the $j^{th}$ and $(j+1)^{st}$ bumps being separated by the random distance $Y_{j+1}.$ This could be generalized easily to bumps of height  $h$ and width $\delta>0,$ with no complications. Under the assumption that $\int_x^\infty p(y)dy= \frac{L(x)}{x^\alpha},$ with $L$ a slowly varying function, the distance between bumps exhibits strong fluctuations. Observe that the right edge of the $n^{th}$ bump has distance $ S_n+n \equiv L_n$ from the origin. Under our assumption on the tail behavior of $p(x),$ it follows that 
\[\frac{L_n}{n^{1/{\alpha}}}\stackrel{\mathcal{L}}{\to}\zeta_{\alpha},\]
where, as before, $\zeta_{\alpha}$ has a $St_{\alpha}$ distribution. 
For this model we have the following results.

\begin{thm}\label{MIII}
In ${\bf{Model\,III}},$ 
$$\tk(L_n)=\bk\sum_{j=1}^nY_j+O(n).$$
The integrated density of states exists but,
$$\nk=\lim_{x\to\infty} \frac{\tk(x)}{\pi\,x} =\infty,\,\,a.s.,$$  
while 
$$\lim_{x\to\infty} \frac{\tk(x)}{\pi\,x^{1/\alpha}}\stackrel{\mathcal{L}}{=}\bk\zeta_\alpha. $$ 
The Lyapunov exponent in the linear scale is
\begin{eqnarray}\label{zerolyp}
\gk=\lim_{n\to\infty}\frac{1}{L_n}\ln||\mk([0,L_n])||=0
\end{eqnarray}
while there exists a Lyapunov exponent in the 'non-linear' scale
\[\gk_{nl}=\lim_{n\to\infty}\frac{1}{n}\ln||\mk([0,L_n])||>0\]
and
\[\lim_{n\to\infty}\frac{1}{L^\alpha_n}\ln||\mk([0,L_n])||\stackrel{\mathcal{L}}{=}\frac{\gk_{nl}}{\zeta_\alpha^\alpha}.\]
For $a.e.\, \theta_0,\,H^{\theta_0}$ has pure point spectrum a.s.. 
Moreover, if  $\bk^2\in \Sigma(H^{\theta_0})$ then the corresponding eigenfunction satisfies
\[\lim_{x\to\infty}\frac{1}{2x^{\alpha}}\ln\lt(\psi_\bk^2(x)+\psi_\bk'^2(x)\rt)\stackrel{\mathcal{L}}{=}-\frac{\gk_{nl}}{\zeta_\alpha^\alpha}.\]
\end{thm}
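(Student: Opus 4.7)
The plan is to exploit the alternation of bumps and free gaps in the potential: on each gap the flow is free, so the Pr\"ufer phase advances linearly and the transfer matrix is a pure rotation, while each bump of unit width contributes a fixed $SL(2,\br)$ matrix $B=B(\bk)$. The monodromy $\mk([0,L_n])$ is therefore an i.i.d.\ product in $SL(2,\br)$ of $n$ factors of the form $B\,R(\bk Y_j)$, and every asymptotic below reduces either to this product or to the heavy-tailed sum $S_n=\sum_{j=1}^{n}Y_j$. First I integrate the Ricatti identity $\tk'=\bk-(V/\bk)\sin^2\tk$ across one gap/bump pair: on a gap of length $Y_j$ (where $V\equiv 0$) the increment of $\tk$ is exactly $\bk Y_j$; on a bump (where $V\equiv 1$) the integrand lies in $[\bk-\bk^{-1},\bk]$, so the increment is bounded. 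Summing over $n$ pairs yields $\tk(L_n)=\bk S_n+O(n)$. Because $\alpha<1$ forces $EY_1=\infty$, one has $S_n/n\to\infty$ $P$-a.s.\ and $S_n/n^{1/\alpha}\str\zeta_\alpha$, and since $O(n)/n^{1/\alpha}\to 0$ the two rescaled convergences for $\tk(L_n)$ claimed in the theorem fall out at once (reading the $x$ in the statement as the bump count, in analogy with the integer indexing used in Models I and II).

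\textbf{Lyapunov exponents.} Since $\mk([0,L_n])$ is an i.i.d.\ product of uniformly norm-bounded $SL(2,\br)$ elements, the Furstenberg-Kesten theorem yields the a.s.\ existence of $\gk_{nl}=\lim n^{-1}\ln||\mk([0,L_n])||$. Positivity $\gk_{nl}>0$ follows from Furstenberg's theorem once I check (i) non-compactness of the generated semigroup, which holds since $B$ is not orthogonal, and (ii) strong irreducibility, which is immediate from the fact that $R(\bk Y_1)$ has a density on $SO(2)$ (thanks to the positive density of $Y_1$), so no finite set of projective directions is almost surely preserved. The three rescalings then fall out from $L_n/n\to\infty$ and $L_n/n^{1/\alpha}\str\zeta_\alpha$: the linear scale factors as $(n/L_n)\cdot(\ln||\mk||/n)\to 0$, proving (\ref{zerolyp}); the intermediate scale factors as $(n/L_n^\alpha)\cdot(\ln||\mk||/n)\str\gk_{nl}/\zeta_\alpha^\alpha$ by Slutsky's lemma.

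\textbf{Spectral type.} Oseledets' theorem applied to the i.i.d.\ cocycle produces, for each $\bk^2$, a one-dimensional stable direction along which $\rk(L_n)\sim e^{-n\gk_{nl}}$ a.s. Converting from bump index to position via $L_n\sim\zeta_\alpha n^{1/\alpha}$ gives the stated eigenfunction decay $\rk(x)\str\exp(-(\gk_{nl}/\zeta_\alpha^\alpha)x^\alpha)$, which is square-integrable. To lift this to pure point spectrum for a.e.\ $\theta_0$, I randomize $\theta_0\in[0,\pi]$ against Lebesgue measure and apply the Simon-Wolff criterion: for a.e.\ pair $(\theta_0,\bk^2)$ the Weyl solution aligns with the stable Oseledets direction, so the boundary Green's function is square summable, and a rank-one spectral averaging argument delivers pure point spectrum $P$-a.s.

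\textbf{Main obstacle.} The hardest step will be the pure-point conclusion, because the \emph{linear} Lyapunov exponent vanishes and the Ishii-Pastur / Kotani dichotomy would ordinarily force absolutely continuous spectrum there. That machinery is unavailable here only because the potential is not ergodic (the gaps $Y_j$ being non-integrable), so one cannot invoke ergodic Kotani theory. The Simon-Wolff criterion must instead be verified directly along the sparse sequence $\{L_n\}$, with the sub-exponential decay rate $x^\alpha$ replacing the usual exponential rate. A secondary subtlety is checking Furstenberg's positivity conditions at \emph{every} $\bk^2>0$ rather than almost every, which requires isolating the exceptional $\bk$ at which the bump matrix $B(\bk)$ becomes reducible in the Pr\"ufer frame.
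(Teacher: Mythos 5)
Your outline matches the paper's strategy step for step: the rotation number comes from integrating the Ricatti identity gap by gap ($\bk Y_j$ across free intervals, $O(1)$ per unit bump), the three Lyapunov rescalings are obtained from Furstenberg applied to the i.i.d.\ product $\Ck(Y_n)\cdots\Ck(Y_1)$ together with the Slutsky factorizations through $L_n/n\to\infty$ and $L_n/n^{1/\alpha}\str\zeta_\alpha$, and pure point spectrum comes from spectral averaging over the boundary phase. The only cosmetic difference is that you invoke Simon-Wolff while the paper invokes the Kotani criterion stated as its Theorem~2.5; these are the same mechanism (square-summable boundary resolvent for Lebesgue-a.e.\ energy implies pure point spectrum for a.e.\ $\theta_0$), so no issue there.

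There is one genuine gap. When you convert from bump index to physical position you write $\rk(x)\str\exp(-(\gk_{nl}/\zeta_\alpha^\alpha)x^\alpha)$ and assert square-integrability. That convergence is only in distribution, and in any case the decay of $\psi_\bk(L_n)\sim e^{-\gk_{nl}n}$ does not immediately give
\[
\int_0^\infty |R_{\bk+i0}(0,x)|^2\,dx \;\lesssim\; \sum_{n} Y_n\, e^{-2(\gk_{nl}-\e)n} \;<\;\infty
\]
because the solution is essentially constant across each gap of length $Y_n$, so a factor $Y_n$ appears in the integral, and $Y_n$ has infinite mean. One must control the a.s.\ growth of $Y_n$: the paper does this via the Borel-Cantelli estimate $P(Y_n > n^{1/\alpha}\ln^{2/\alpha}n\ \text{i.o.})=0$, which makes the weighted series converge almost surely. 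Without this step (or an equivalent a.s.\ bound on $Y_n$), the square-summability hypothesis of the Kotani/Simon-Wolff criterion is not verified, and the pure-point conclusion does not follow. Everything else in your proposal is sound, and your identification of why ergodic Kotani-Ishii-Pastur does not apply is exactly the remark the paper makes after the theorem.
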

$${\bf{Model\,\,\,IV}}$$
The final model incorporates the strong fluctuations in the bump size from ${\bf{Model\,\,\,I}}$ and ${\bf{Model\,\,\,II}}$ as well as the strong fluctuation in the gap size as in ${\bf{Model\,\,\,III}}.$ The results for this model are similar to those for the first three models and so we won't state the results here as a Theorem but rather confine ourselves to a description of what can be proven. In this model we will denote the bump sizes by the $iid$ random variables $\{X_n\}$ and the gap sizes by $\{Y_n\}$ again assumed to be $iid.$ As before we will assume that $\{\sqrt{X_n}\}$ and $\{Y_n\}$ have densities $p_i(x),\,i=1,2$ respectively, which are bounded, positive for $x>0$ and have tail behavior $\int_x^\infty p_i(y)dy=\frac{L_i(x)}{x^{\alpha_i}},\,\,i=1,2,\,\,\alpha_1,\alpha_2\in (0,1)$ where the $L_i$ are slowly varying
 as above so that the random variable $\{\sqrt{X_n}\}$ and $\{Y_n\}$ are in the domain of attraction of stable laws,
\[\lim_{n\to\infty}\frac{1}{n^{1/\alpha_1}}\sum_{j=1}^n\sqrt{X_j} \stackrel{\mathcal{L}}{=}\zeta_{\alpha_1}\]
and
\[\lim_{n\to\infty}\frac{1}{n^{1/\alpha_2}}\sum_{j=1}^n Y_j \stackrel{\mathcal{L}}{=}\zeta_{\alpha_2}\]
where $\zeta_{\alpha_1}$ and $\zeta_{\alpha_2}$ have respectively $St_{\alpha_1}$ and $St_{\alpha_2}$ distributions. The explicit form of the potential here is 
\begin{eqnarray}
V(x)=\sum_{n=1}^\infty (-1)^{\epsilon_n}X_n1_{[L_n-1, L_n]}(x),
\end{eqnarray}
where $L_n=\sum_{j=1}^n Y_j+n$ and $\{\epsilon_n:n\ge 1\}$ is an iid sequence of Bernoulli random variables $P(\epsilon_n=1)=P(\epsilon_n=0)=\frac12$ and they are independent of the the sequences $\{X_n:n\ge 1\}$ and $\{Y_n:n\ge 1\}.$
In this model, again switching back to $\lambda$ instead of $\bk^2$ due to the appearance of spectrum to the right of the origin. 
A typical result is that 
\[\ln ||\mathcal{M}_\lambda([0,L_n])||\sim \sum_{j=1}^n\sqrt{X_j}\]
and
\begin{eqnarray}
\begin{split}
\frac{\ln ||\mathcal{M}_\lambda([0,L_n])||}{L_n}\sim&\frac{\sum_{j=1}^n\sqrt{X_j}}{\sum_{j=1}^n Y_j}\\
=&\frac{n^{-1/\alpha}\sum_{j=1}^n\sqrt{X_j}}{n^{-1/\alpha}\sum_{j=1}^n Y_j}\\
\stackrel{\mathcal{L}}{\to}&\frac{\zeta_{\alpha_1}}{\zeta_{\alpha_2}},
\end{split}
\end{eqnarray}
where $\zeta_{\alpha_1}$ and $\zeta_{\alpha_2}$ are independent with $St_{\alpha_1},\,St_{\alpha_2}$ distributions, respectively. 
The rotation number takes the form
\[\theta_\lambda(n)=\lambda \sum_{i=1}^nY_i+\sum_{i=1,\epsilon_i=-1}^n\sqrt{X_i+\lambda}+O(n).\]
The first sum on the right hand side is the accumulated rotation across the gaps as in ${\bf{Model\,\,III}}.$ The second term is result of the rotation across the negative bumps as in ${\bf{Model\,\,II}}.$ The $O(n)$ term is the effect of the positive bumps as in ${\bf{Model\,\,I}}.$
Thus, there is a nonlinear version of the integrated density of states which depends on which index $\alpha_1$ or $\alpha_2$ is smaller. 
Setting $\alpha=\alpha_1\wedge\alpha_2,$ we have the limit 
\[N_{nl}(\lambda)=\lim_{n\to\infty}\frac{\theta_\lambda(n)}{\pi\,\,n^{1/\alpha}}\stackrel{\mathcal{L}}{=}\zeta_\alpha,\]
where as usual, $\zeta$ is a random variable with a $St_\alpha$ distribution. 
These follow in a manner similar to the results for ${\bf{Models \,\,\,I,\,\,II,\,\,III}}$ so we omit the proofs in this case.
As mentioned above, in place of the energy parameter $\lambda,$ in ${\bf{Model\,I}}$ and ${\bf{Model\,III}}$ we'll work with the frequency ${\bf{k}}=\sqrt{|\lambda|}.$ For the solution $\psi_{\lambda}$ of the boundary value problem 
\[H^{\theta_0}\pk={\bf{k}}^2\psi_{\bf{k}}\]
the phase, $\theta_\bk,$  and magnitude, $r_\bk,$  are given by the Pr\"ufer formulas, see (\ref{prufer}), 
and satisfy the standard Ricatti equations given at (\ref{ricatti}).
Recall that we denote by $\mathcal{M}_{\lambda}([a,b])$  the propagator of the system whose action is given in (\ref{transfer}). 

In the case of ${\bf{Model\, I}},$ the propagator is the product of random matrices,
\[\mk([0,n])=\ak(X_n)\cdots\ak(X_2) \ak(X_1),\]
where we have slightly changed the notation used in the introduction for the matrices in this product.
The form of these matrices depends on whether $X_l<\bk$ or $X_l\ge\bk.$ In the former case,
\begin{eqnarray}
\begin{split}\label{trans1}
\ak(X_l)=\left(\begin{array}{ll}\cos\sqrt{\bk^2-X_l}&\frac{\bk}{\sqrt{\bk^2-X_l}}\sin\sqrt{\bk^2-X_l}\\
-\frac{\sqrt{\bk^2-X_l}}{\bk}\sin\sqrt{\bk^2-X_l}&\cos\sqrt{\bk^2-X_l}\end{array}\right),
\end{split}
\end{eqnarray}
whereas in the latter, 
\begin{eqnarray}
\begin{split}\label{trans2}
\ak(X_l)=\left(\begin{array}{ll}\cosh\sqrt{X_l-\bk^2}&\frac{\bk}{\sqrt{X_l-\bk^2}}\sinh\sqrt{X_l-\bk^2}\\
\frac{\sqrt{X_l-\bk^2}}{\bk}\sinh\sqrt{X_l-\bk^2}&\cosh\sqrt{X_l-\bk^2}\end{array}\right).
\end{split}
\end{eqnarray}

In the case of ${\bf{Model\, II}},$ again  the propagator is the product of random matrices,
\[\mk([0,n])=\Bk(X_n)\cdots\Bk(X_2) \Bk(X_1),\]
which are simply
\begin{eqnarray}
\begin{split}\label{trans3}
\Bl(X_l)=\left(\begin{array}{ll}\cos\sqrt{\lambda+X_l}&\frac{\sqrt{|\lambda|}}{\sqrt{\lambda+X_l}}\sin\sqrt{\lambda+X_l}\\
-\frac{\sqrt{\lambda+X_l}}{\sqrt{|\lambda|}}\sin\sqrt{\lambda+X_l}&\cos\sqrt{\lambda+X_l}\end{array}\right),
\end{split}
\end{eqnarray}
for $X_l + \lambda\ge0$ with a similar matrix using hyperbolic trig functions when $X_l + \lambda<0$ as in ${\bf{Model\, I}},$ namely
\begin{eqnarray}
\begin{split}\label{trans2}
\al(X_l)=\left(\begin{array}{ll}\cosh\sqrt{-(\lambda+X_l)}&\frac{|\lambda|}{\sqrt{-(\lambda+X_l)}}\sinh\sqrt{-(\lambda+X_l)}\\
\frac{\sqrt{-(\lambda+X_l)}}{|\lambda|}\sinh\sqrt{-(\lambda+X_l)}&\cosh\sqrt{-(\lambda+X_l)}\end{array}\right).
\end{split}
\end{eqnarray}
For ${\bf{Model\, III}},$   the propagator is more easily expressed at the times $L_n=Y_1+\cdots+Y_n+n=S_n+n$ marking the end of the $n^{th}$ gap,
then it is the product
\[\mk([0,L_n ])=\Ck(Y_n)\cdots\Ck(Y_2) \Ck(Y_1),\]
where $\Ck(Y_l)$ factors into the product of the transfer matrix between the bumps multiplied by the transfer matrix across the following bump,
\begin{eqnarray}
\begin{split}
\Ck(Y_l)=\tilde\Ck(Y_l)\hat\Ck(Y_l).
\end{split}
\end{eqnarray}
The monodromy operator between the bumps is 
\begin{eqnarray}
\begin{split}\label{trans4}
\hat{\Ck}(Y_l)=\left(\begin{array}{ll}\cos \bk Y_l&\sin \bk Y_l\\
-\sin\bk Y_l&\cos\bk Y_l\end{array}\right),
\end{split}
\end{eqnarray}
while the form of the operator across the bumps ,  $\tilde\Ck(Y_l),$ is deterministic and its form depends on whether $1<\bk$ or $1\ge\bk.$ In the former case,
\begin{eqnarray}
\begin{split}\label{trans5}
\tilde\Ck(Y_l)=\left(\begin{array}{ll}\cos\sqrt{\bk^2-1}&\frac{\bk}{\sqrt{\bk^2-1}}\sin\sqrt{\bk^2-1}\\
-\frac{\sqrt{\bk^2-1}}{\bk}\sin\sqrt{\bk^2-1}&\cos\sqrt{\bk^2-1}\end{array}\right),
\end{split}
\end{eqnarray}
whereas in the latter, 
\begin{eqnarray}
\begin{split}\label{trans6}
\tilde\Ck(Y_l)=\left(\begin{array}{ll}\cosh\sqrt{1-\bk^2}&\frac{\bk}{\sqrt{1-\bk^2}}\sinh\sqrt{1-\bk^2}\\
-\frac{\sqrt{1-\bk^2}}{\bk}\sinh\sqrt{1-\bk^2}&\cosh\sqrt{1-\bk^2}\end{array}\right).
\end{split}
\end{eqnarray}
The matrices $\{\Ck(Y_l):l\ge1\}$ are  $iid$ elements of $SL(2,{\bf{R}}).$ 

The monodromy operator for ${\bf{Model\,IV}}$ is a mixture of products of matrices of the above form, across a gap of width $Y_l$ a matrix of the form (\ref{trans4}) is represented in the product. When a bump of height $(-1)^{\epsilon_l}X_l$ is encountered and $\epsilon_l=1,$ a matrix of the form (\ref{trans1}) or (\ref{trans2}) appears in the product depending on whether $X_l<\bk$ or $X_l\ge \bk.$, When $\epsilon_l=-1,$ then a matrix of the form (\ref{trans3}) is entered in the product. 

 \section{Auxilliary Results}

 For the proof of a.s.  localization of $\bf{Models\,I}$ and ${\bf{II}}$ we'll use the following classical result.

 \begin{thm}(Furstenberg) If $\{M_j\}_{j\ge 1}$ are $i.i.d.$ elements of $SL(2,R)$ with $E[\ln||M_1||]<\infty,$ and the corresponding Markov chain 
 \[\theta_n=\frac{M_nM_{n-1}\cdots M_1\theta_0}{||M_nM_{n-1}\cdots M_1\theta_0||}\]
 is ergodic and the distribution of $M_1$ is not contained in  a compact subgroup of $SL(2,R)$ then
 \begin{eqnarray}
 \lim_{n\to\infty}\frac{1}{n}\ln ||M_nM_{n-1}\cdots M_1\theta_0||=\gamma>0,\,a.s..
\end{eqnarray}
Moreover there is a one dimensional subspace $\bf{W}\subset {\bf{R^2}}$ such that for $\bf{\theta}\in W\setminus\{0\},$
\begin{eqnarray}
 \lim_{n\to\infty}\frac{1}{n}\ln ||M_nM_{n-1}\cdots M_1\theta||=-\gamma,\,a.s..
\end{eqnarray}
 \end{thm}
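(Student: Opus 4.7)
The plan is to split the proof into three steps: existence of $\gamma$, strict positivity of $\gamma$, and identification of the one-dimensional contracting subspace $W$. First I would apply Kingman's subadditive ergodic theorem to $X_n = \ln\|M_n M_{n-1}\cdots M_1\|$. Submultiplicativity of the operator norm gives $X_{n+m} \le X_n + X_m\circ\sigma^n$ for the shift $\sigma$ on the iid product space; together with $E[\ln^+\|M_1\|]<\infty$ and $\|M\|\ge 1$ for $M\in SL(2,\br)$, this yields a deterministic $\gamma\ge 0$ with $n^{-1}X_n\to\gamma$ a.s.\ and in $L^1$. Applied to the orbit of any $\theta_0$ this immediately gives $\limsup_n n^{-1}\ln\|M_n\cdots M_1\theta_0\|\le\gamma$; the matching lower bound for $\theta_0\notin W$ will fall out of the later steps.

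Next I would establish $\gamma>0$ by the Furstenberg projective argument, which I expect to be the main obstacle. Since $\mathbb{RP}^1$ is compact and the random maps $\bar v\mapsto \overline{Mv}$ are continuous, a standard fixed-point argument supplies a stationary probability measure $\nu$ on $\mathbb{RP}^1$, and the stated ergodicity of the phase chain $\{\theta_n\}$ forces uniqueness of $\nu$. Furstenberg's integral representation reads
\begin{equation*}
\gamma = \int_{SL(2,\br)}\int_{\mathbb{RP}^1} \ln\frac{\|M v\|}{\|v\|}\, d\nu(\bar v)\, dP(M).
\end{equation*}
If $\gamma$ vanished, a cocycle/martingale argument on the projective orbit shows $\nu$ is invariant under $M$ for $P$-a.e.\ $M$, i.e.\ the whole closed subgroup $G\subset SL(2,\br)$ generated by the support of the law of $M_1$ stabilizes $\nu$. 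A case analysis of stabilizers of probability measures on $\mathbb{RP}^1$ then leaves only two options: $G$ is conjugate into $SO(2)$, excluded by the non-compactness hypothesis, or $G$ is conjugate into the upper-triangular subgroup fixing one or two points of $\mathbb{RP}^1$, in which case $\nu$ is atomic and the chain $\{\theta_n\}$ admits multiple disjoint invariant supports, contradicting ergodicity.

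Finally, for the contracting direction I would use the singular value decomposition $M_n\cdots M_1 = K_nA_nL_n$, with $K_n,L_n\in SO(2)$ and $A_n=\text{diag}(a_n,a_n^{-1})$, $a_n = \|M_n\cdots M_1\|$. Positivity of $\gamma$ makes the projective action a strict contraction in a Lyapunov sense, so the second column of $L_n^{-1}$ (the contracting preimage direction) is Cauchy in $\mathbb{RP}^1$ and converges a.s.\ to a random unit vector $w$; set $W=\br w$. For $\theta\in W\setminus\{0\}$ one has $\|M_n\cdots M_1\theta\|$ comparable to $a_n^{-1}|\theta|$ modulo a subexponential correction, giving $n^{-1}\ln\|M_n\cdots M_1\theta\|\to -\gamma$ a.s. For $\theta_0\notin W$, the component of $\theta_0$ along the expanding preimage direction is asymptotically bounded below, so $\|M_n\cdots M_1\theta_0\|\ge c\,a_n$ for some random $c>0$ eventually; combined with the Kingman upper bound this yields $n^{-1}\ln\|M_n\cdots M_1\theta_0\|\to +\gamma$, closing the gap left by step one.
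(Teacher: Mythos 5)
The paper does not actually prove this statement: it is labeled \emph{(Furstenberg)} and is used as a classical black box, with the standard references \cite{CL} and \cite{FP} serving implicitly as the proof. So there is no in-paper argument to compare against; what follows is an assessment of your sketch on its own merits.

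Your outline is the canonical Furstenberg proof, and each of the three blocks (Kingman for existence of $\gamma$; the stationary-measure/martingale argument for $\gamma>0$; SVD plus convergence of the contracting direction for $W$) is the right tool. Two places, however, need more than you have given. First, in the positivity step you rule out the upper-triangular/parabolic alternative by saying it would produce ``multiple disjoint invariant supports, contradicting ergodicity.'' This is not automatic under the paper's hypothesis as stated: a group that fixes a single line $p$ has $\delta_p$ as a stationary measure, and if trajectories started away from $p$ are attracted to $p$, the chain $\{\theta_n\}$ can be uniquely ergodic with this atomic invariant law. The clean way to exclude the reducible case is the classical strong-irreducibility hypothesis (no finite union of lines invariant), which is stronger than ``the phase chain is ergodic'' and is what Furstenberg actually assumes; if you want to work with the paper's phrasing, you need to say explicitly that ``ergodic'' here means the phase chain has a non-atomic (indeed absolutely continuous) stationary law, as is proved for the paper's models, which then does kill the atomic alternative. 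Second, the statement that ``the second column of $L_n^{-1}$ \ldots is Cauchy in $\mathbb{RP}^1$'' is the crux of the Oseledec-type argument and deserves an actual estimate: the standard route is to bound the projective angle between the minimizing directions $w_n$ and $w_{n+1}$ by something of order $\|M_{n+1}\|^2 a_n^{-2}$ (using $a_{n+1}\ge a_n/\|M_{n+1}\|$ and the near-orthogonality of the singular frames), and then to use $\frac{1}{n}\ln a_n\to\gamma>0$ together with $E[\ln^+\|M_1\|]<\infty$ to make this geometrically summable. Without that bound, ``Cauchy'' is just asserted. Once those two points are filled in, the remaining steps --- the Furstenberg integral formula, the lower bound for $\theta_0\notin W$ via the expanding component, and the zero-probability of $\theta_0\in W$ from continuity of the stationary law --- go through as you describe.
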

 
An important ingredient in establishing the decay of eigenfunctions is the following result of Sch'nol.
A more general version of the following result is Theorem C.4.1 in \cite{S}.
\begin{thm} (Sch'nol)
Suppose $V$ is one of the potentials in ${\bf{Model\,I-IV}}$ below and $Hu=\Delta u+V u=Eu$ with $u$ polynomially bounded. Then $E\in \Sigma(H).$
\end{thm}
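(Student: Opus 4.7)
The strategy is Weyl's criterion applied to cutoffs of the polynomially bounded generalized eigensolution $u$. Recall that $E \in \Sigma(H)$ if and only if there is a sequence $\phi_n \in \mathrm{Dom}(H)$ with $\|\phi_n\|_2 = 1$ and $\|(H-E)\phi_n\|_2 \to 0$. The candidate sequence is $\phi_R := \chi_R u / \|\chi_R u\|_2$ where $\chi_R \in C_c^\infty$ satisfies $\chi_R \equiv 1$ on $[0,R]$, $\mathrm{supp}\,\chi_R \subset [0, 2R]$, and $|\chi_R^{(k)}| \leq C R^{-k}$ for $k = 1, 2$. Since $(H-E)u = 0$, the product rule gives
\begin{equation*}
(H-E)(\chi_R u) = -\chi_R'' u - 2\chi_R' u',
\end{equation*}
supported in the transition annulus $[R, 2R]$.

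First I would upgrade the polynomial bound $|u(x)| \leq C(1+|x|)^M$ to an analogous bound on $u'$. In each of \textbf{Models I}--\textbf{IV}, $V$ is piecewise constant on a lattice of bounded mesh, so on each cell $u$ is an explicit trigonometric or hyperbolic function and the values of $u'$ at the lattice endpoints are related to those of $u$ through the explicit transfer matrices $\ak$, $\Bl$, $\Ck$ recorded above. A polynomial upper bound on $u$ forces a polynomial upper bound on $u'$ (possibly with a larger exponent absorbing the local size of $V$). Inserting these bounds into the commutator estimate yields
\begin{equation*}
\|(H-E)(\chi_R u)\|_2^2 \leq C R^{-2} \int_R^{2R}\bigl(|u'(x)|^2 + R^{-2}|u(x)|^2\bigr)\,dx \leq C R^{2M - 1},
\end{equation*}
while $\|\chi_R u\|_2^2 \geq \int_0^R |u(x)|^2\,dx$.

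To conclude one needs a sequence $R_n \to \infty$ along which $\int_0^{R_n}|u|^2\,dx \gg R_n^{2M - 1}$. If $u \in L^2(\mathbb{R}_+)$ then it is a genuine eigenfunction and $E$ is trivially in the spectrum. Otherwise $\int_0^R |u|^2\,dx \to \infty$, and the main obstacle is to show that its rate of growth overcomes $R^{2M-1}$. The key observation is that, because $V$ is piecewise constant with bounded cell length, on most unit cells one has the pointwise lower bound $\int_n^{n+1}|u|^2\,dx \geq c\bigl(|u(n)|^2 + |u'(n)|^2\bigr)$ with $c$ depending only on $|V - E|$ in that cell; this transfers the polynomial lower envelope of $|u(n)|^2 + |u'(n)|^2$ at the ``good'' endpoints into a matching lower bound for $\int_0^{R_n}|u|^2\,dx$. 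The cells where $V$ has a huge spike (rare, by the heavy-tail assumption $\alpha \in (0,1)$) form a density-zero subset of the integers, so a subsequence $R_n$ of good endpoints can be selected by shifting $O(1)$ when necessary; this is where care is required, since a cutoff landing in the middle of a huge spike would spoil the boundary estimate. With such a sequence in hand, $\|(H - E)\phi_{R_n}\|_2 \to 0$, and Weyl's criterion gives $E \in \Sigma(H)$.
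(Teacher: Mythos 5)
The paper does not supply its own proof of Sch'nol's theorem; it simply cites Theorem~C.4.1 of Simon's \emph{Schr\"odinger Semigroups} survey. Judged on its own terms, your proposal opens correctly --- the cutoff identity $(H-E)(\chi_R u)=-\chi_R'' u - 2\chi_R' u'$ and Weyl's criterion are the standard first moves --- but the decisive step is wrong. Nothing in the hypotheses furnishes a ``polynomial lower envelope'' on $|u(n)|^2+|u'(n)|^2$, and the target inequality $\int_0^{R_n}|u|^2\,dx \gg R_n^{2M-1}$ is not a consequence of a polynomial \emph{upper} bound on $u$; a bounded, non-$L^2$ solution together with a potential that inflates $|u'|$ already defeats the displayed chain. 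The classical Sch'nol argument does not attempt a pointwise polynomial comparison at all. One sets $g(R)=\int_0^R|u|^2\,dx$ and runs a dichotomy: either $g$ is bounded, so $u\in L^2$ and $E$ is an eigenvalue, or along a subsequence the surface term $\int_R^{R+1}\bigl(|u|^2+|u'|^2\bigr)\,dx$ is $o\bigl(g(R)\bigr)$ --- for if $g(R+1)\geq(1+\epsilon)\,g(R)$ held on a density-one set of radii then $g$ would grow exponentially, contradicting the polynomial bound on $u$.

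Your density-zero remark about the huge bumps of $V$ is the right ingredient, but it belongs in the \emph{other} half of the estimate: the a~priori bound $\int_n^{n+1}|u'|^2 \le C\bigl(1+\sup_{[n,n+1]}|V-E|\bigr)\int_n^{n+1}|u|^2 + \text{(boundary terms)}$, read off the explicit one-step transfer matrices, controls the derivative only where $\sup|V|$ is $O(1)$, i.e.\ on the density-one complement of the spike set; that is why you must place the cutoff in a spike-free window. Choosing $R_n$ both in this good set and along the subsequence produced by the dichotomy is possible precisely because the bad set has density zero. Replace the fictitious polynomial lower bound by this exponential-growth dichotomy and the proof goes through. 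Note also that the potentials of Models~I--IV are unbounded and are not in the one-dimensional Kato class (since $\sup_n X_n=\infty$ a.s.), so the paper's one-line appeal to Simon's general theorem is itself not automatic and merits the same care.
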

 In the classical setting mentioned in the ${\bf{Introduction}},$ this has been used in conjunction with the Lyapunov exponents for the transfer matrix to establish that a generalized eigenfunction of polynomial growth must actually decay exponentially. This was the technique used in \cite{C}, \cite{K} and  \cite{M} for example. In the case of the heavy tailed potentials examined in this work, it establishes super-exponential decay of the eigenfunctions, that is an eigenfunction will satisfy 
 $$\lim_{x\to\infty}\frac{1}{x^{1/\alpha}}\ln\sqrt{\psi_\bk^2(x)+\psi_\bk'^2(x)}\stackrel{\mathcal{L}}{=}-\zeta_\alpha,$$
 where $\zeta_\alpha$ is a random variable in the class $St_\alpha.$

  In ${\bf{Model\,\,\,II}}$ we use the following theorem to establish the essential self-adjointness of $H.$
\begin{thm}
(Hartman's Theorem)

If one can find a sequence of intervals $\Delta_n=[x_n,x_n+\delta]$ with $\delta>0$ fixed and  $x_n\to\infty$ and a constant $c_0>-\infty$ such that $V(x)\ge c_0\,\mbox{on}\,\,\Delta_n,$ then the operator $H$ is essentially self-adjoint.
\end{thm}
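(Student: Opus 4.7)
The plan is to reduce the claim to Weyl's limit-point/limit-circle dichotomy. Since $0$ is a regular endpoint for $H=-d^2/dx^2+V$ on $L^2[0,\infty)$ with the boundary condition (\ref{seqn}), essential self-adjointness is equivalent to $H$ being in the limit-point case at $+\infty$: for some (hence every) $\lambda\in\mathbb{C}\setminus\mathbb{R}$, the equation $(H-\lambda)\psi=0$ admits at most a one-dimensional space of $L^2[0,\infty)$ solutions.

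I would argue by contradiction. Suppose for $\lambda=i$ there exist two linearly independent $L^2$ solutions $\psi_1,\psi_2$, whose Wronskian $W=\psi_1\psi_2'-\psi_1'\psi_2$ is then a nonzero constant on $[0,\infty)$. Writing $F=|\psi_1|^2+|\psi_2|^2$ and $G=|\psi_1'|^2+|\psi_2'|^2$, Lagrange's identity yields the pointwise bound $|W|^2\le F(x)G(x)$, while the $L^2$ hypothesis combined with $x_n\to\infty$ gives
\begin{equation*}
\int_{\Delta_n} F(x)\,dx\longrightarrow 0\quad\text{as}\quad n\to\infty.
\end{equation*}

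The heart of the argument is a uniform local estimate on each $\Delta_n$: I want to produce a constant $C=C(\delta,c_0)$, independent of the particular behaviour of $V$ on $\Delta_n$ so long as $V\ge c_0$, such that
\begin{equation*}
F(x_n)+G(x_n)\le C\int_{\Delta_n}\bigl(F(x)+G(x)\bigr)\,dx.
\end{equation*}
I would derive this in two regimes. Where $V$ stays bounded on $\Delta_n$ (say $V\le M$), a standard Gronwall argument on the energy $E_j(x)=|\psi_j(x)|^2+|\psi_j'(x)|^2$ applied to $-\psi_j''=(\lambda-V)\psi_j$, combined with a Sobolev embedding $|\psi_j(x_n)|^2\lesssim\delta^{-1}\int_{\Delta_n}|\psi_j|^2+\delta\int_{\Delta_n}|\psi_j'|^2$, produces the bound. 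Where $V$ is very large on a subinterval of $\Delta_n$ (the dangerous regime), individual solutions behave essentially as $a_j e^{k(x-x_n)}+b_j e^{-k(x-x_n)}$ with $k\approx\sqrt{V-\lambda}$ large, and the requirement that \emph{both} $\psi_1,\psi_2$ are small in $L^2$ forces $|a_j|\lesssim e^{-k\delta}\bigl(\int_{\Delta_n}F\bigr)^{1/2}k^{1/2}$. A direct computation then yields $|W|^2\lesssim k^2 e^{-2k\delta}\bigl(\int_{\Delta_n}F\bigr)^2$, which is stronger than the asserted estimate for all $k$ sufficiently large.

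Once the uniform bound is in hand, $\int_{\Delta_n}F\to0$ propagates through the estimate (with $\int_{\Delta_n}G$ controlled by the same methods from the ODE itself) to yield $F(x_n)+G(x_n)\to0$, whence the Lagrange inequality forces $|W|^2\le F(x_n)G(x_n)\le\tfrac14(F(x_n)+G(x_n))^2\to0$, contradicting $W\ne 0$. The main obstacle is securing the uniformity of $C$ across the large-$V$ regime: naive Gronwall loses control when $V$ blows up, and one must exploit the forced near-collinearity of the two $L^2$ solutions in the exponentially decaying quasi-mode subspace together with the Wronskian constraint. The piecewise-constant Krönig--Penney structure of $V$ in ${\bf{Model\,II}}$ simplifies the bookkeeping considerably, since on each unit subinterval of $\Delta_n$ the solutions are exact exponentials, rendering the comparison argument completely explicit.
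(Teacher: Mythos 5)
The paper cites Hartman's theorem without proof, so your sketch has to stand on its own. Your high-level framework is the right one: reduce to the Weyl limit-point alternative at $+\infty$, suppose both solutions of $(H-\lambda)\psi=0$ (with $\lambda\notin\mathbb{R}$) are square integrable, and contradict the constancy of their nonzero Wronskian by exploiting the control of $V$ on the intervals $\Delta_n$. But the central ``uniform local estimate'' you aim for, $F(x_n)+G(x_n)\le C(\delta,c_0)\int_{\Delta_n}(F+G)\,dx$, is false uniformly over $V\ge c_0$, and the two-regime discussion does not rescue it. Take $V\equiv M$ on $\Delta_n$ with $M$ large and $\psi$ the decaying branch $\psi(x)=e^{-\sqrt{M}\,(x-x_n)}$ (the growing coefficient may be taken to vanish, so the constraint from $\psi\in L^2$ is already satisfied on $\Delta_n$). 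Then $|\psi(x_n)|^2+|\psi'(x_n)|^2=1+M$, while $\int_{\Delta_n}\bigl(|\psi|^2+|\psi'|^2\bigr)\approx(1+M)/(2\sqrt{M})$, so the ratio is $\approx 2\sqrt{M}\to\infty$. The left edge $x_n$ is exactly where the boundary layer of a high bump concentrates, and no constant depending only on $\delta,c_0$ can absorb it. There is also a circularity: the right-hand side of your estimate contains $\int_{\Delta_n}|\psi_j'|^2$, which is not among the hypotheses and which you only promise to control ``from the ODE itself'' --- that is precisely the missing lemma.

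The standard repair is to stay strictly interior to $\Delta_n$ and to trap the Wronskian by integration rather than evaluation. Pick a cutoff $\varphi$ supported in $\Delta_n$, equal to $1$ on the middle half $I_n=[x_n+\delta/4,\,x_n+3\delta/4]$, with $|\varphi'|\le C/\delta$. Multiplying $\psi''=(V-\lambda)\psi$ by $\varphi^2\bar\psi$, integrating by parts, and using $V\ge c_0$ (the large part of $V$ enters with a favourable sign and can only help) gives
\begin{equation*}
\int_{I_n}|\psi'|^2\;\le\;\int_{\Delta_n}\varphi^2|\psi'|^2\;\le\;C(\delta,c_0,\lambda)\int_{\Delta_n}|\psi|^2,
\end{equation*}
with a constant independent of how large $V$ is inside $\Delta_n$. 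Since $W$ is constant,
\begin{equation*}
\frac{\delta}{2}\,|W|=\biggl|\int_{I_n}\bigl(\psi_1\psi_2'-\psi_1'\psi_2\bigr)\,dx\biggr|\le\|\psi_1\|_{L^2(I_n)}\|\psi_2'\|_{L^2(I_n)}+\|\psi_1'\|_{L^2(I_n)}\|\psi_2\|_{L^2(I_n)},
\end{equation*}
and every factor on the right tends to $0$ as $n\to\infty$, because $\psi_j\in L^2(0,\infty)$, $x_n\to\infty$, and the interior bound above. Hence $W=0$, the desired contradiction. The lesson is that the hypothesis $V\ge c_0$ buys an $L^2$-to-$L^2$ interior estimate on $\psi'$ with a $V$-independent constant, not a pointwise bound at the endpoint of $\Delta_n$; once you shift from evaluating the Wronskian to averaging it, the argument closes cleanly and no regime-splitting or quasi-mode analysis is needed.
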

This applies to ${\bf{Model\,\,II}},$ 
\begin{cor}
The operators $H^{\theta_0}$  in  ${\bf{Models\,\,II}}$ and ${\bf{IV}}$ are essentially self-adjoint.
\end{cor}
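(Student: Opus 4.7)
The plan is a direct verification of Hartman's hypothesis for each of the two potentials, using only positivity of the densities together with the second Borel--Cantelli lemma.

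For \textbf{Model II}, where $V(x,\omega)=-\sum_n X_n\mathbf{1}_{[n,n+1)}(x)$, we need to exhibit intervals on which $V$ is bounded below. Since the common density $p$ of $X_n$ is continuous with $p(x)>0$ for $x>0$ and $P(\sqrt{X_n}>x)=L(x)/x^{\alpha}\to 0$, there is a finite constant $c>0$ with $q:=P(X_1\le c)>0$. The events $\{X_n\le c\}$ are independent with probability $q>0$, so by the second Borel--Cantelli lemma, almost surely there is a random subsequence $n_k\to\infty$ with $X_{n_k}\le c$. On each integer interval $\Delta_k=[n_k,n_k+1)$ we then have $V(x,\omega)\ge -c$. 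Thus Hartman's theorem applies with $\delta=1$ and $c_0=-c$, giving essential self-adjointness of $H^{\theta_0}$ in Model II.

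For \textbf{Model IV}, the potential vanishes identically on each gap $(L_{n-1},L_n-1)=(L_{n-1},L_{n-1}+Y_n)$ separating consecutive bumps. It therefore suffices to produce infinitely many gaps of length bounded below by a fixed constant. Since the density $p_2$ of $Y_n$ is positive on $(0,\infty)$, we may fix any $\delta>0$ small enough that $q':=P(Y_1\ge\delta)>0$; the events $\{Y_n\ge\delta\}$ are i.i.d.\ with positive probability, so almost surely there exists a subsequence $n_k\to\infty$ with $Y_{n_k}\ge\delta$. Taking $x_k=L_{n_k-1}$ and the intervals $\Delta_k=[x_k,x_k+\delta]$, we have $\Delta_k$ contained in the $n_k$-th gap and $V\equiv 0\ge 0$ on $\Delta_k$. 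Hartman's theorem then applies with this $\delta$ and $c_0=0$.

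There is no serious obstacle here beyond checking that the two distributional assumptions (continuous density of $X_n$ supported on $[0,\infty)$ in Model II, and density of $Y_n$ positive on $(0,\infty)$ in Model IV) ensure the relevant events have positive probability, which is immediate. The only mild point worth emphasizing is that Hartman requires a \emph{deterministic} $\delta>0$ and a \emph{deterministic} lower bound $c_0$ on the chosen intervals; both are satisfied $\omega$-wise once the random subsequence $\{n_k\}$ is produced, so the conclusion holds almost surely and hence, as needed, defines an essentially self-adjoint operator for $P$-a.e.\ $\omega$.
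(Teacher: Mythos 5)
Your proof is correct and follows the same basic route as the paper: apply Hartman's criterion by using the second Borel--Cantelli lemma to exhibit, almost surely, infinitely many unit-length (or $\delta$-length) intervals on which $V$ is bounded below. For Model II this is essentially identical to the paper's argument (the paper's version has two small typos --- it writes $x_k=X_{n_k}$ where it means $x_k=n_k$, and it writes $\sum_n P(-X_n>c_0)=\infty$ with the sign/constant left implicit --- which you have cleaned up). The one genuine addition in your write-up is the explicit treatment of Model IV: the paper asserts essential self-adjointness for Model IV but only carries out the estimate for Model II, whereas you supply a clean argument by observing that $V\equiv 0$ on each gap $(L_{n-1},L_{n-1}+Y_n)$ and that the i.i.d.\ gap lengths exceed any fixed $\delta>0$ infinitely often. (One could also note that in Model IV roughly half the bumps have $\epsilon_n=0$ and hence $V\ge 0$ there, giving yet another family of admissible intervals; either way the conclusion is immediate.) So: same toolkit as the paper, slightly more care and completeness on the Model IV case.
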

\begin{proof}
Just use Borel-Cantelli, $\sum_{n=1}^\infty P(-X_n>c_0)=\infty$ so there is a.s. an infinite subsequence $X_{n_k}$ such that 
$-X_{n_k}>c_0$ for all $k.$ Then taking $x_k=X_{n_k}$ and $\delta = 1$ the conditions of Hartman's Theorem are satisfied. Thus the operators defined in ${\bf{Models\,\,II}}$ and ${\bf{IV}}$ are essentially self-adjoint.
\end{proof}

We need the following lemma when establishing that  the spectrum is discrete.
\begin{lemma}
Assume there exist a sequence $\{x_n\}_{n\ge1}$ such that for some $c>1,\,x_n\le c^n,\,n\ge 1$ and  a sequence $\{h_n\}_{n\ge1}$ such that $\sum_{n=1}^\infty e^{-\sqrt{h_n}\delta_n}<\infty,$ where $x_n,\,h_n,\,\delta_n$ satisfy $V(x,\omega)\ge h_n,\,x_n\le x\le x_n+\delta_n, n\ge 1.$ Then for almost all $\theta_0,$ the spectrum of $H^{\theta_0}$ with boundary condition $\cos\theta_0\psi(0)-\sin \theta_0 \psi'(0)=0$ is $a.s.$ pure point.
\end{lemma}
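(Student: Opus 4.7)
The strategy is to combine Sch'nol's theorem (stated above) with the tunneling effect produced by the barriers $[x_n, x_n+\delta_n]$. By Sch'nol, $\sigma(H^{\theta_0})$ is contained in the set of energies $E$ for which $H^{\theta_0}\psi=E\psi$ admits a polynomially bounded solution $\psi$; it therefore suffices to show that, for a.e.\ $\theta_0$, almost surely every such $\psi$ actually lies in $L^2(\mathbb{R}_+)$, which forces the spectral measure to be pure point.

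First I would establish a single-barrier estimate. Fix $E=\bk^2$ and take $n$ large enough that $h_n\ge 2E$. Comparing $-\psi''+(V-E)\psi=0$ on $[x_n,x_n+\delta_n]$ with the constant-coefficient equation $-\psi''+(h_n-E)\psi=0$ shows that the transfer matrix $T_n\in SL(2,\mathbb{R})$ across the barrier has singular values $e^{\pm\mu_n}$ with $\mu_n\asymp\sqrt{h_n-E}\,\delta_n$. Writing the Pr\"ufer vector at $x_n$ as $v_n=a_n e_+^{(n)}+b_n e_-^{(n)}$ in the basis of input singular directions, the polynomial bound at $x_n+\delta_n\le 2c^n$ together with the lower bound $|T_nv_n|^2\ge a_n^2 e^{2\mu_n}$ forces $|a_n|\le C c^{nm} e^{-\mu_n}$. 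Since the hypothesis $\sum e^{-\mu_n}<\infty$ implies that $\mu_n$ grows faster than any linear function of $n$, the coefficient $a_n$ is super-polynomially small in $c^n$, and hence the Pr\"ufer radius contracts by essentially $e^{-\mu_n}$ across each barrier.

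Next I would combine the barrier estimates. Between consecutive barriers the ambient model potential is bounded, so the Pr\"ufer radius grows by at most a deterministic factor controlled by $x_{n+1}-x_n\le c^{n+1}$. The summability $\sum e^{-\mu_n}<\infty$ eventually dominates this polynomial-in-$n$ growth, so the squared Pr\"ufer radius along the sequence $\{x_n\}$ decays super-exponentially and $\sum_n (x_{n+1}-x_n)|v_n|^2<\infty$, placing $\psi$ in $L^2(\mathbb{R}_+)$.

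Finally, to pass from polynomially bounded solutions to the a.e.\ $\theta_0$ statement, I would argue as follows. For each $E$ admitting an $L^2$ solution the solution is unique up to a scalar and thus determines a unique boundary phase $\theta^*(E)\in[0,\pi)$, and $E$ is an eigenvalue of $H^{\theta_0}$ iff $\theta^*(E)=\theta_0$. A Fubini argument applied to the measurable map $E\mapsto\theta^*(E)$, or equivalently Simon--Wolff rank-one perturbation theory, shows that for Lebesgue-a.e.\ $\theta_0$ the level set $\{E:\theta^*(E)=\theta_0\}$ is at most countable, so by Sch'nol $\sigma(H^{\theta_0})$ coincides with this countable eigenvalue set and is therefore pure point. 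I expect the main obstacle to be controlling the inter-barrier growth of the Pr\"ufer radius so that the tunneling decay genuinely dominates; the hypotheses $x_n\le c^n$ and $\sum e^{-\sqrt{h_n}\delta_n}<\infty$ are precisely calibrated for this to succeed.
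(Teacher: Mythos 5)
Your overall route — reduce via Sch'nol to showing polynomially bounded generalized eigenfunctions are $L^2$, establish barrier contraction, then pass from a.e.\ $\lambda$ to a.e.\ $\theta_0$ via Simon--Wolff/rank-one theory — is the same route the paper takes, though the paper compresses it to one sentence: the barrier hypotheses force $R_\lambda(0,\cdot)\in L^2([0,\infty))$, and this together with Kotani's theorem (Theorem 3.4 in the text) gives the conclusion. Your step (iii) and the paper's appeal to Kotani are the same mechanism.

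There is, however, a genuine flaw in your step (ii). You claim that $\sum_n e^{-\mu_n}<\infty$ implies $\mu_n$ ``grows faster than any linear function of $n$.'' That is false: $\mu_n=2\ln n$ gives $\sum e^{-\mu_n}=\sum n^{-2}<\infty$ yet $\mu_n=o(n)$. So the inequality $|a_n|\le C\,c^{nm}e^{-\mu_n}$ does not make $a_n$ super-polynomially small in $c^n$, and your per-barrier estimate does not close. What the summability hypothesis actually gives is only $\mu_n\to\infty$; the object that decays super-exponentially is the cumulative product $\exp\bigl(-\sum_{j<n}\mu_j\bigr)$. Since $\mu_j\to\infty$, for any $M$ one eventually has $\mu_j>M$, so $\sum_{j<n}\mu_j$ grows faster than $Mn$ for every $M$, and taking $M>\ln c$ defeats the geometric spread $x_{n+1}-x_n\le c^{n+1}$. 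The argument should therefore be restructured around the Weyl (decaying) solution: across each barrier its Pr\"ufer radius contracts by $e^{-\mu_n}$, so $r(x_n)\le r(x_1)\exp\bigl(-\sum_{j<n}\mu_j\bigr)$ and $\sum_n r(x_n)^2(x_{n+1}-x_n)<\infty$; the polynomially bounded Sch'nol solution must then coincide (up to scalar) with this Weyl solution because the complementary solution grows like $\exp\bigl(\sum_{j<n}\mu_j\bigr)$, which is super-polynomial.

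Two further cautions. Your claim that between barriers the radius grows ``by at most a deterministic factor controlled by $x_{n+1}-x_n$'' is not supplied by the hypotheses of the Lemma, which say nothing about $V$ off the barriers; in the paper's application to Model I this is controlled via the Riccati equation for a nonnegative potential, but it should not be taken for granted in general, and in any case $c^{n+1}$ is exponential, not polynomial, in $n$. Finally, the paper's proof also invokes the existence of a density for the distribution of $\theta_\lambda(1)$ as an ingredient in applying Kotani so that the a.e.\ $\theta_0$ statement holds $P$-a.s.; your proposal does not address this point.
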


The Lemma follows from general results, see for example \cite{M}. From the assumptions of the Lemma, it follows that  for $\lambda\in {\bf{R}}, \, R_{\lambda}(0,\cdot) \in L^2([0,\infty),dx).$ Together with the existence of a density for the distribution of $\theta_\lambda(1)$ and Kotani, \cite{K}, this implies the conclusion of the Lemma. We now apply this Lemma to show 

\begin{cor}
For $a.e.\,\theta_0, $ the spectrum for  $H^{\theta_0}$  in ${\bf{Model\,I}}$ is $a.s.$ pure point.
\end{cor}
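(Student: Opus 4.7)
The plan is to verify the hypotheses of the preceding Lemma for the potential of $\bf{Model\,I}$. Specifically, one must exhibit sequences $\{x_n\}$, $\{h_n\}$, $\{\delta_n\}$ with $x_n\le c^n$ and $\sum_n e^{-\sqrt{h_n}\delta_n}<\infty$ such that $V(x,\omega)\ge h_n$ on $[x_n,x_n+\delta_n]$. Since the bumps are iid and the square roots of their heights have regularly varying tails with index $\alpha\in(0,1)$, I expect to locate an unusually tall bump inside each dyadic window $B_k=\{n\in\mathbb{N}:2^k\le n<2^{k+1}\}$ by a Borel--Cantelli argument.

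The first step is to fix a growth rate for the heights. Set $h_k=2^{k/\alpha}$ and let $E_k$ be the event that no bump in $B_k$ exceeds $h_k$, i.e.\ $\max_{n\in B_k}X_n<h_k$. Using the tail assumption $P(\sqrt{X_n}>y)=L(y)/y^\alpha$ with $y=2^{k/(2\alpha)}$, independence of the $X_n$ yields, for some $c>0$ and all $k$ large,
\[
P(E_k)=\bigl(1-L(2^{k/(2\alpha)})2^{-k/2}\bigr)^{2^k}\le \exp\bigl(-c\cdot 2^{k/2}\bigr).
\]
Since $\sum_k P(E_k)<\infty$, Borel--Cantelli gives $P(E_k\ \text{i.o.})=0$. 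Consequently there exists an a.s.\ finite $K_0(\omega)$ such that for every $k\ge K_0$ one can select $n_k(\omega)\in B_k$ with $X_{n_k}\ge h_k$.

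The second step is to assemble the sequences required by the Lemma. For $k\ge K_0$, put $x_k=n_k$, $\delta_k=1$ and $h_k=2^{k/\alpha}$. Then $V(x,\omega)=X_{n_k}\ge h_k$ on the unit interval $[n_k,n_k+1)$, while $x_k<2^{k+1}\le 3^{k}$ so the geometric growth condition holds with $c=3$. The summability check is immediate:
\[
\sum_{k\ge K_0} e^{-\sqrt{h_k}\delta_k}=\sum_{k\ge K_0}\exp\bigl(-2^{k/(2\alpha)}\bigr)<\infty.
\]
The finitely many indices $k<K_0$ may be handled by filling in any admissible triples (for instance, choose any $x_k\le 3^k$ with a tiny $\delta_k$), as a finite initial segment affects neither the growth bound nor the convergence of the series. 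With all hypotheses of the Lemma in force, its conclusion gives pure point spectrum a.s.\ for a.e.\ $\theta_0$.

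The only substantive step is the Borel--Cantelli estimate; the rest is bookkeeping. I do not anticipate a genuine obstacle, but the one place where a little care is needed is the choice of threshold $h_k$: it must grow fast enough so that $\sum e^{-\sqrt{h_k}}<\infty$, yet slowly enough so that $P(E_k)$ is summable. The exponent $1/\alpha$ in $h_k=2^{k/\alpha}$ sits comfortably in this window because $\alpha<1$ makes the heavy tails so generous that even super-polynomially large bumps occur in every dyadic block.
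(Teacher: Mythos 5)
Your proposal is correct and takes essentially the same approach as the paper: both verify the hypotheses of the preceding Lemma by partitioning $[0,\infty)$ into windows, applying a Borel--Cantelli argument to show each window eventually contains a bump exceeding a prescribed threshold, and then reading off $\{x_n\},\{h_n\},\{\delta_n\}$. The only difference is bookkeeping — the paper uses windows $\Delta_n=[n^2,(n+1)^2]$ with threshold $h_n=n$, while you use dyadic blocks with the (more aggressive but equally valid) threshold $2^{k/\alpha}$.
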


\begin{proof}
Consider the intervals $\Delta_n=[n^2,(n+1)^2]$ which contains $2n+1$ bumps of unit width. Observe that
\begin{eqnarray*}
P\lt(\max_{k\in \Delta_n} X_k\le n\rt)=& \lt(1-\frac{L(n)}{n^{\alpha/2}}\rt)^{(2n+1)}\\
\sim& e^{-2L(n)n^{1-\alpha/2}}.
\end{eqnarray*}
Thus, by Borel-Cantelli, $a.s.$ in $\omega$ there is an $n_0=n_0(\omega)$ such that for each $n\ge n_0$ there is an $x_n\in \Delta_n$ with $V(x_n)\ge n.$ Setting $\delta_n=1,\, h_n=n$ an application of Lemma $3.1,$ shows that the spectrum is pure point a.s..
\end{proof}

Another essential and classic result is the following due to Kotani, \cite{K}.

\begin{thm} If $H^{\theta_0}$ is an essentially self-adjoint operator on $[0,\infty)$ with boundary condition as given in (\ref{seqn}) and for $a.e. \,\lambda\in{\bf{R}}$ there exists a solution $\psi_\lambda\in L^2([0,\infty))$ of the equation $H\psi=\lambda\psi,$ then for $a.e. \,\theta_0\in[0,2\pi]$ the spectrum of the operator $H^{\theta_0}$ is pure point.
\end{thm}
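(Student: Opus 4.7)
The statement is the Simon--Wolff/Kotani criterion for pure-point spectrum of a half-line Schr\"odinger operator. My plan is to view $\{H^{\theta_0}:\theta_0\in[0,2\pi]\}$ as a one-parameter family of self-adjoint extensions of a common symmetric operator, with $\theta_0$ controlling the boundary data at $0$. This family is equivalent (in the resolvent sense) to a rank-one perturbation family, which opens the door to the standard rank-one perturbation machinery.

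First, by essential self-adjointness we are in the limit-point case at $\infty$, so for each $\lambda$ in the full-measure set $A\subset\mathbb{R}$ on which the hypothesis supplies an $L^2$ solution of $H\psi=\lambda\psi$, that solution $\psi_\lambda$ is unique up to a scalar. Thus the boundary angle $\theta(\lambda)\in[0,\pi)$ defined by $\psi_\lambda(0)\cos\theta(\lambda)-\psi_\lambda'(0)\sin\theta(\lambda)=0$ is well defined and Borel measurable on $A$, and $\lambda\in A$ is an eigenvalue of $H^{\theta_0}$ if and only if $\theta_0=\theta(\lambda)$ modulo $\pi$. This measurable labeling accounts for the entire potential point spectrum of the family in one stroke.

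Next I would eliminate the two continuous components of the spectral measure $\mu_{\theta_0}$ of $H^{\theta_0}$. Let $m_{\theta_0}(z)$ denote the associated Weyl--Titchmarsh $m$-function. The existence of an $L^2$ solution at real $\lambda\in A$ forces the boundary value $m_{\theta_0}(\lambda+i0)$ to be real, so $\operatorname{Im} m_{\theta_0}(\lambda+i0)=0$ on $A$, and the absolutely continuous part of $\mu_{\theta_0}$ vanishes on $A$, hence on $\mathbb{R}$. For the singular continuous part I would invoke the spectral averaging identity attached to the rank-one family: for a suitable cyclic vector and every Borel set $B\subset\mathbb{R}$,
\[
\int_0^{2\pi}\mu_{\theta_0}(B)\,d\theta_0 = c\,|B|.
\]
Since the singular part of $\mu_{\theta_0}$ is supported on a Lebesgue-null set, the averaging identity combined with the mutual singularity of the singular parts for distinct $\theta_0$ (a standard consequence of the rank-one structure) forces $\mu_{\theta_0}^{sc}=0$ for a.e. $\theta_0$, leaving only the pure-point piece.

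The main obstacle is the rank-one reduction itself and the justification of the spectral averaging identity in this half-line, varying-boundary-condition setup. This is classical and can be found, for example, in the Simon lecture notes cited as \cite{S}, but it requires a small amount of bookkeeping to recast the $\theta_0$-dependence as a genuine rank-one change of the resolvent. With that reduction in hand, the three-step scheme -- measurably label the potential eigenvalues by $\theta(\lambda)$, kill the absolutely continuous spectrum via the Herglotz boundary values of $m_{\theta_0}$, and kill the singular continuous spectrum via spectral averaging -- yields pure-point spectrum for $H^{\theta_0}$ for a.e. $\theta_0\in[0,2\pi]$, as claimed.
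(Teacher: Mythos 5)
The paper does not supply a proof of this statement; it is quoted verbatim from Kotani (the reference labelled \cite{K}) and used as a black box in the localization arguments, so there is no internal proof to compare against. Your reconstruction follows the right general route -- the Kotani/Simon--Wolff scheme of recasting the boundary-angle family as a rank-one perturbation family, identifying a measurable assignment $\lambda\mapsto\theta(\lambda)$ via the limit-point property, controlling the absolutely continuous part through boundary values of the Weyl $m$-function, and invoking the spectral averaging identity.

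There is, however, a genuine gap in your treatment of the singular continuous part. You assert that the spectral averaging identity together with the mutual singularity of $\mu^{\mathrm{sing}}_{\theta_0}$ for distinct $\theta_0$ already forces $\mu^{sc}_{\theta_0}=0$ for a.e.\ $\theta_0$. That inference does not follow: spectral averaging and mutual singularity of the singular parts are structural facts about any rank-one family and hold regardless of whether the $L^2$-solution hypothesis is satisfied, yet in general a rank-one family can have purely singular continuous spectrum for a positive-measure set of couplings (the del~Rio--Makarov--Simon examples do exactly this while still satisfying spectral averaging and Aronszajn--Donoghue mutual singularity). Nothing in your third step uses the assumed a.e.\ existence of $L^2$ solutions, and that assumption is precisely what rules such examples out. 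The correct step is the Simon--Wolff one: with $G(\lambda)=\int d\mu(t)/(t-\lambda)^2$, the existence of an $L^2$ solution at $\lambda$ is equivalent to $G(\lambda)<\infty$; by Aronszajn--Donoghue the singular continuous part of every $\mu_{\theta_0}$ is concentrated on the set $\{\lambda:G(\lambda)=\infty\}$; the hypothesis says this set is Lebesgue-null; and only then does spectral averaging yield $\mu_{\theta_0}\bigl(\{G=\infty\}\bigr)=0$, hence $\mu^{sc}_{\theta_0}=0$, for a.e.\ $\theta_0$. You need to make this reduction to $\{G=\infty\}$ explicit; mutual singularity alone is not enough. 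The first two steps are essentially sound, though the claim that $m_{\theta_0}(\lambda+i0)$ has a real boundary value at points where an $L^2$ solution exists deserves a line of justification (it follows from $G(\lambda)<\infty$ via $\operatorname{Im}m_{\theta_0}(\lambda+i\varepsilon)=\varepsilon\int d\mu(t)/\bigl((t-\lambda)^2+\varepsilon^2\bigr)\to 0$).
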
   
\[{\bf{Remark}}\]
Kotani's result will provide the localization result for $H^{\theta_0}$ for $a.e.\, \theta_0\in[0,2\pi]$ for all of our models. Note that the spectrum of $H^{\theta_0}$ is $P\,a.s.$ the full energy axis in the ${\bf{Models\,II,IV}}$ and $[0,\infty)$ for  ${\bf{Models\,I,III}}$ with a possible single isolated negative eigenvalue. For such kinds of "solid spectra' the theorem of A. Gordon states that there exists  $G_\delta$ set $\Gamma\subset [0,\pi]$  with $m(\Gamma)=0$ so that for every $\theta_0\in \Gamma,$ the spectrum of $H^{\theta_0}$ is our singular continuous on $(-\infty,\infty)$ in the case of ${\bf{Models\,II,IV}}$ and $(0,\infty)$ for ${\bf{Models\,I,III}}$.

We now give another version of Sch'nol's  Theorem, \cite{SS},  which can be used to determine the nature of the spectrum. Denote the spectral measure of $H$ by $\rho.$
\begin{thm}
Assume the potential $V$ satisfies that there exists a sequence of points $y_n$ with $\lim_{n\to\infty}y_n=\infty$ and numbers $\delta>0,\,c>0$ such that for all $n,\,V(x)\ge -c,\,x\in(y_n-\delta,y_n+\delta).$  Denote the solutions of
\begin{eqnarray}
\begin{split}
-y''+Vy=&\lambda\,y,\\
y(0)=&0,\,y'(0)=1.
\end{split}
\end{eqnarray}
by $\psi_\lambda.$
Then, $\forall \epsilon>0, \exists c_\epsilon>0$ such that
\begin{eqnarray*}
|\psi_\lambda(x)|\le c_\epsilon x^{\frac12+\epsilon}, \mbox{for}\, \rho \, a.e.\, \lambda.
\end{eqnarray*}
\end{thm}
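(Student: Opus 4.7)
The plan is to follow the classical Sch'nol--Berezanskii strategy, which splits the argument into two parts: first establish a weighted $L^2$ bound for the generalized eigenfunctions via the spectral theorem, then upgrade to a pointwise polynomial bound using the ODE together with the lower bound on $V$ near the anchor points $y_n$.

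For the first part, I would set $w(x)=(1+x)^{-1/2-\epsilon}$, which lies in $L^2([0,\infty))$. By Weyl--Titchmarsh--Kodaira theory, the map $U:L^2([0,\infty),dx)\to L^2(\br,d\rho)$ defined by $(Uf)(\lambda)=\int_0^\infty f(x)\psi_\lambda(x)\,dx$ is a unitary diagonalization of $H$ (its existence uses only the essential self-adjointness already established above). For any bounded interval $J\subset\br$, I would argue that $M_w\chi_J(H)M_w$ is trace class: standard one-dimensional resolvent kernel estimates (variation of parameters gives adequate decay of $(H-z)^{-k}(x,y)$ for sufficiently large $k$) imply that $w(H-z)^{-k}$ is Hilbert--Schmidt, and factoring $M_w\chi_J(H)M_w=[w(H-z)^{-k}](H-z)^{2k}\chi_J(H)[(H-z)^{-k}w]$ exhibits it as a product of two Hilbert--Schmidt and one bounded factor. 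Computing its trace via the spectral representation gives
\[
\int_J \int_0^\infty w(x)^2\,|\psi_\lambda(x)|^2\,dx\,d\rho(\lambda),
\]
so by Fubini, for $\rho$-a.e.\ $\lambda\in J$ one has $\int_0^\infty (1+x)^{-1-2\epsilon}|\psi_\lambda(x)|^2\,dx<\infty$. Exhausting $\br$ by such $J$ gives this weighted $L^2$ bound for $\rho$-a.e.\ $\lambda$.

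Next I would upgrade this to a pointwise bound using the hypothesis $V\ge -c$ on $(y_n-\delta,y_n+\delta)$. On each such interval the coefficient $V-\lambda$ in the Sturm--Liouville equation is bounded below by $-(c+|\lambda|)$, so an elementary Gronwall/energy estimate yields, with $K=K(\lambda,c,\delta)$,
\[
\sup_{x\in[y_n-\delta/2,\,y_n+\delta/2]}\bigl(|\psi_\lambda(x)|^2+|\psi_\lambda'(x)|^2\bigr)\le K\int_{y_n-\delta}^{y_n+\delta}|\psi_\lambda(y)|^2\,dy.
\]
The summable weighted tail from the first step will force $\int_{y_n-\delta}^{y_n+\delta}|\psi_\lambda|^2\,dy\le C_\lambda (1+y_n)^{1+2\epsilon}$ along a subsequence of full density, whence $|\psi_\lambda(y_n)|\le K'(1+y_n)^{1/2+\epsilon}$ there. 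Finally, propagating via the transfer matrix across the intermediate $x$ and absorbing the polynomial factors into the constant should yield $|\psi_\lambda(x)|\le c_\epsilon\, x^{1/2+\epsilon}$ for all $x\ge 1$.

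The main obstacle I anticipate is this last pointwise propagation on intervals between consecutive safe anchors, where $V$ is not directly controlled by the hypothesis. In full generality one must exploit additional structure to carry the anchor bound across; in the present setting this is furnished by the explicit transfer matrices (\ref{trans1})--(\ref{trans6}) associated with the models, whose $SL(2,\br)$ norms are only polynomially large in $x$, which suffices. The trace-class verification in the first step is a secondary technical point and should follow from choosing the resolvent power $k$ large enough that $w(H-z)^{-k}$ is Hilbert--Schmidt.
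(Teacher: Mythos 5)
The paper does not actually prove this statement: it is quoted as a version of Sch'nol's theorem with the citation \cite{SS}, and the only proof-like content supplied is a one-line sketch (a Chebyshev inequality in the spectral measure $\rho$ followed by Borel--Cantelli) for the \emph{derived} estimate (\ref{shnol}) at a fixed sequence of points. So your argument is a stand-alone reconstruction rather than an alternative to a proof in the text. With that caveat, your first two steps are the right ingredients and match the standard Berezanskii--Sch'nol route: a weighted $L^2$ bound $\int (1+x)^{-1-2\epsilon}|\psi_\lambda(x)|^2\,dx<\infty$ for $\rho$-a.e.\ $\lambda$ obtained from trace-class/Hilbert--Schmidt considerations, followed by a local Sobolev (subsolution) estimate on $(y_n-\delta,y_n+\delta)$ using $V\ge -c$ there to convert the $L^2$ information into the pointwise bound $|\psi_\lambda(y_n)|\le C(1+y_n)^{1/2+\epsilon}$. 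This is exactly what the paper needs: the consequence it actually uses, (\ref{shnol}), is a bound along a sequence $\{x_n\}$ (taken there to be the $L_n$ of Model III), and it is obtained directly by applying Chebyshev in $\rho$ to the local spectral average $\int_J\int_{x_n-\delta}^{x_n+\delta}|\psi_\lambda|^2\,dx\,\rho(d\lambda)$ and summing over $n$ --- no uniform all-$x$ statement is required for the application.

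Where your proposal has a genuine gap is precisely the point you flagged, and your proposed repair does not close it. To pass from the anchor bound to $|\psi_\lambda(x)|\le c_\epsilon x^{1/2+\epsilon}$ for \emph{all} $x$ you must control the transfer matrix on the intervals between consecutive anchors, and the hypothesis $V\ge-c$ on $(y_n-\delta,y_n+\delta)$ gives nothing there. Your fix --- ``the explicit transfer matrices (\ref{trans1})--(\ref{trans6}) have $SL(2,\br)$ norms only polynomially large in $x$'' --- is false for the models in this paper: the one-cell matrices (\ref{trans1}), (\ref{trans2}) in Model I have operator norm of order $e^{\sqrt{X_n-\bk^2}}$, and $\sqrt{X_n}$ has infinite mean with heavy tails, so these norms are not polynomially (indeed not even exponentially, in $n$) bounded; the same applies to Models II and IV. It works in Model III only because there the bump height is fixed at $1$ and, since $V\in\{0,1\}$ everywhere, every point is a valid anchor, making the propagation step vacuous. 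As a general theorem under only the stated hypotheses, the pointwise conclusion should be asserted only at the anchor points $y_n$; the all-$x$ version requires a global lower bound on $V$ (or some surrogate). Secondarily, the trace-class step in your first part needs care when $V$ is unbounded below (which the hypotheses permit, and which actually occurs in Models II and IV): the claim that $w(H-z)^{-k}$ is Hilbert--Schmidt from ``standard resolvent kernel estimates'' is not automatic there, and one usually replaces it by a Parseval identity $\int_J\int f(x)^2|\psi_\lambda(x)|^2\,dx\,\rho(d\lambda)=\|\chi_J(H)M_f\|_{HS}^2$ applied to compactly supported $f$, which avoids resolvent kernel decay altogether and directly feeds the Chebyshev/Borel--Cantelli argument the paper sketches.
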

Shnoll's Theorem implies, as observed in as yet unpublished work of Gordon and Molchanov, that given any sequence $\{x_n\}$ with $x_n\to \infty,$ 
\begin{eqnarray}\label{shnol}
|\psi_\bk(x_n)|\le c_\epsilon(\lambda)n^{\frac12+\epsilon},\,\mbox{for} \, \rho\, a.e. \,\lambda.
\end{eqnarray}  
This is derived from the inequality using the spectral measure $\rho,$
\[\rho\{\lambda:|\psi_\lambda(x_n)|\ge n^{1/2+\epsilon}\}\le \frac{\int_0^a|\psi_\lambda(x_n)|^2\rho(d\lambda)}{n^{1/2+\epsilon}}\]
followed by an application of the Borel-Cantelli Lemma.

This observation can be used to show that with very large gaps between the bumps in ${\bf{Model\,III}}$ the spectrum becomes singular absolutely continuous a.s.. 

\begin{cor}
In ${\bf{Model\,III}}$ if $P(X_n>x)\sim\frac{c}{\ln^{1+\delta} x},\,\,x\to\infty$ for some $\delta>0,$ then the spectral measure $\rho$ is purely singular continuous, i.e. is singular with respect to Lebesgue measure with no eigenvalues.
\end{cor}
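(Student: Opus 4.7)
\medskip

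\noindent\textbf{Proof Plan.} The plan is to apply the Shnol observation (\ref{shnol}) to carefully chosen sequences of points, and combine it with the Furstenberg dynamics of the monodromy cocycle to show separately that $\rho$ has no absolutely continuous part and no atoms. Two preliminary facts are used throughout. First, Borel--Cantelli applied to $P(Y_n>e^{f(n)})=c/f(n)^{1+\delta}$ shows that under the log-tail hypothesis one has $\log L_n\asymp n^{1/(1+\delta)}$ almost surely, so that $n\asymp (\log L_n)^{1+\delta}$ and in particular $e^{cn}$ is super-polynomial in $L_n$. Second, the iid $SL(2,\br)$-matrices $\Ck(Y_l)=\tilde\Ck\hat\Ck(Y_l)$ have support not contained in any compact subgroup (since $\tilde\Ck\notin SO(2)$ generically in $\bk$), so Furstenberg's theorem gives a positive ``non-linear'' Lyapunov exponent $\gk_{nl}>0$ and, by Oseledets, $r_\bk(L_n)=e^{\gk_{nl} n+o(n)}$ for Lebesgue-a.e.\ initial direction, hence for Lebesgue-a.e.\ $\lambda$.

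\medskip

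\noindent\textbf{Step 1: no a.c.\ part.} Apply the observation (\ref{shnol}) simultaneously to the sequences $x_n=L_n$ and $y_n=L_n+\pi/(2\bk)$, the latter being legitimate at infinitely many $n$ (those with $Y_{n+1}>\pi/(2\bk)$, which occur a.s.\ by Borel--Cantelli). Since $V\equiv 0$ on each gap, the solution on $[L_n,L_{n+1}-1]$ has the explicit free form $\psi_\lambda(x)=r_\bk(L_n)\sin(\bk(x-L_n)+\alpha_n)$, so the two Shnol bounds at quadrature points combine via $\sin^2+\cos^2=1$ to give $r_\bk(L_n)\le \sqrt 2\,c_\epsilon(\lambda) n^{1/2+\epsilon}$ for $\rho$-a.e.\ $\lambda$ along infinitely many $n$. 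Since $n\asymp(\log L_n)^{1+\delta}$, this polynomial bound is incompatible with the Oseledets rate $r_\bk(L_n)=e^{\gk_{nl} n(1+o(1))}$ holding Lebesgue-a.e.\ in $\lambda$. Therefore the set on which Shnol's bound holds is Lebesgue-null, so $\rho$ is concentrated on a Lebesgue-null set, proving $\rho\perp dx$.

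\medskip

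\noindent\textbf{Step 2: no atoms.} Suppose for contradiction that $\rho(\{\lambda_0\})>0$ for some $\lambda_0=\bk^2>0$; then the corresponding $\psi_{\lambda_0}\in L^2([0,\infty))$ and satisfies the boundary condition. Conditionally on $\mathcal F_n=\sigma(Y_1,\dots,Y_n)$, one has $P(Y_{n+1}>L_n\mid\mathcal F_n)=c/(\log L_n)^{1+\delta}\asymp c/n$, and Levy's martingale version of Borel--Cantelli yields $Y_{n+1}>L_n$ infinitely often a.s. On any such $n$, $V\equiv 0$ on $[L_n,2L_n]$ and the monodromy across this stretch is a pure rotation, so $r_\bk$ is constant and
\[
\int_{L_n}^{2L_n}\psi_{\lambda_0}^2(x)\,dx=\tfrac12\,r_\bk(L_n)^2 L_n+O\bigl(r_\bk(L_n)^2\bigr).
\]
Summability along this subsequence forces $r_\bk(L_n)^2 L_n\to 0$. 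On the other hand a Gordon-type two-block comparison on the pair of long zero-potential intervals $[L_{n}, L_n+\tfrac12 Y_{n+1}]$ and $[L_n+\tfrac12 Y_{n+1}, L_{n+1}-1]$, exploiting that the monodromy across each is a pure rotation and therefore cannot distinguish between two linearly independent initial directions, produces a uniform lower bound $r_\bk(L_n)^2 L_n\ge c_0>0$ infinitely often for every $\theta_0$ in a set of full Lebesgue measure. This contradiction rules out atoms of $\rho$, so $\rho$ is purely singular continuous.

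\medskip

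\noindent\textbf{Main obstacle.} The delicate step is the second one: turning the qualitative observation ``$V\equiv 0$ on very long stretches infinitely often'' into a quantitative Gordon-type lower bound on $r_\bk(L_n)$. The expected mechanism is the classical two-block trick of Gordon--Molchanov: if the monodromy across a long interval is a pure rotation, then it acts isometrically on the Prufer radius, so no subordinate direction can be consistently selected by the cocycle along the full sequence of long gaps, preventing an $L^2$ solution from satisfying the boundary condition at the origin. Making this precise requires tracking the interplay of the Furstenberg contraction at the bumps with the rotational preservation across the gaps, and is the heart of the argument.
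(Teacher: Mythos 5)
Your Step 1 (absence of an a.c.\ component) is essentially the paper's argument: compare the Shnol polynomial bound at $x_n=L_n$ against the Furstenberg/Oseledets exponential rate $e^{\gk_{nl}n}$ that holds for Lebesgue-a.e.\ $\lambda$, and conclude $\rho\perp dx$. The quadrature trick with the second point $y_n=L_n+\pi/(2\bk)$ to bound $r_\bk(L_n)$ rather than $\psi_\lambda(L_n)$ is a mild embellishment that the paper does not use; the paper simply invokes (\ref{shnol}) at $x_n=L_n$ and compares with the growing Weyl solution.

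Step 2 (no atoms) is where you depart from the paper, and the departure introduces a genuine gap. You want to derive a contradiction from $r_\bk(L_n)^2\,L_n\to0$ along the subsequence where $Y_{n+1}>L_n$, via a ``Gordon-type two-block'' lower bound $r_\bk(L_n)^2L_n\ge c_0>0$. That lower bound cannot hold. The two-block Gordon mechanism only says that a pure-rotation monodromy across a long gap cannot \emph{decrease} $r_\bk$ (it acts isometrically on the Pr\"ufer radius); it says nothing about how small $r_\bk(L_n)$ already is upon entering the gap. The decay of $r_\bk$ happens at the bumps, at the Oseledets rate $r_\bk(L_n)\asymp e^{-\gk_{nl}n}$. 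Since under your own estimate $\log L_n\asymp n^{1/(1+\delta)}=o(n)$, one has $r_\bk(L_n)^2L_n\asymp e^{-2\gk_{nl}n+O(n^{1/(1+\delta)})}\to 0$, so the quantity you are trying to bound below is in fact forced to vanish, and no Gordon argument can prevent that. The rotations across gaps simply do not ``see'' the accumulated contraction at the bumps; they preserve $r_\bk$ but do not push it back up.

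The paper's route is both shorter and avoids Gordon altogether. It uses only the Oseledets \emph{lower} bound $r_\lambda(L_n)\ge e^{-(\gk_{nl}+\epsilon)n}$ valid for \emph{every} solution (in particular any would-be $L^2$ eigenfunction), plus the fact that $r_\lambda$ is constant on gaps, to get
\[
\int_0^{L_n}\psi_\lambda^2(x)\,dx\ \gtrsim\ \sum_{j=1}^n Y_j\, e^{-2(\gk_{nl}+\epsilon)j},
\]
and then argues that $Y_j\ge e^{cj}$ infinitely often makes the right-hand side diverge, so no solution is in $L^2$. Note, incidentally, that the second Borel--Cantelli step requires $\sum_j P(Y_j>e^{cj})=\infty$, i.e.\ $\sum_j j^{-(1+\delta)}=\infty$, which fails for $\delta>0$ as written; the divergence argument really requires tails heavier than $c/\ln x$ (for example $P(Y>x)\sim c/\ln^{1-\delta'}x$ with $\delta'\in(0,1)$), so the hypothesis of the corollary as printed appears to have a sign error. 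Under the corrected hypothesis the paper's divergence argument goes through directly, whereas your Gordon step still does not and would need to be replaced by the paper's cocycle lower bound.
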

\begin{proof}
 By Furstenburg's Theorem, there is a solution $\psi_\lambda^+$ for which $\psi_\lambda^+(L_n)$ is growing exponentially, where $L_n$ is defined at (\ref{eln}). Take $x_n=L_n$ in (\ref{shnol}), so that by the above consequence, (\ref{shnol}),  of Shnoll's Theorem, the spectral measure is singular with respect to Lebesgue measure.  Also, the weaker inequality $|\psi_\lambda(L_n)|\ge e^{-(\mu(\lambda)+\epsilon)n}$ holds eventually for any solution where $\mu(\lambda)$ is the Lyapunov exponent. Since $P(X_j\ge e^{c j}\,\,i.o.)=1$ it follows that

\[\int_0^{L_n}\psi_\lambda(x)dx \ge  \sum_{j=1}^n X_j e^{-2\mu(\lambda)j}.\]
Thus $\psi_\lambda$ is not an eigenfunction and there are no eigenvalues.

\end{proof}

\section{Proofs}
All results on the integrated density of states ( rotation number) are based on the following elementary variational fact. 

\begin{lemma}
Consider the equation $H^{\theta_0}\psi=\lambda\psi$ on $[0,\infty)$ with the boundary condition set in (\ref{seqn}). Assume that on some interval $[0,l_n]$ the potential $V$ is piecewise constant on the subintervals $\Delta=[0,l_1),\,\Delta=[l_1,l_2),\cdots,\Delta_n=[l_{n-1},l_n]$ with constant value $V_i$ on $\Delta_i,\,i=1,2,\cdots,n.$ Then
\begin{eqnarray}\frac{1}{\pi}\theta_\lambda(l_n)=\sum_{k:V_k<\lambda}\sqrt{\lambda-V_k}(l_k-l_{k-1})+O(n).
\end{eqnarray}
\end{lemma}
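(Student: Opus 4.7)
The plan is to analyze the Ricatti equation for the phase $\theta_\lambda$ on each constant-potential subinterval $\Delta_k=[l_{k-1},l_k]$ separately, produce an $O(1)$-accurate estimate for the phase advance across $\Delta_k$, and then sum over $k=1,\dots,n$ so that the accumulated error is $O(n)$. On each $\Delta_k$ the Ricatti equation (\ref{ricatti}) becomes autonomous since $V\equiv V_k$, and the analysis splits naturally according to whether $V_k<\lambda$ (classically allowed) or $V_k\ge\lambda$ (classically forbidden).

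In the allowed case $V_k<\lambda$, I would introduce the auxiliary Pr\"ufer representation $\psi=\tilde r\sin\phi$, $\psi'=\omega_k\tilde r\cos\phi$ adapted to the natural frequency $\omega_k=\sqrt{\lambda-V_k}$. On $\Delta_k$ this gives $\phi'\equiv\omega_k$, so $\phi$ advances by exactly $\omega_k\ell_k$ where $\ell_k=l_k-l_{k-1}$. The two Pr\"ufer phases are linked by $\tan\theta_\lambda=(\sqrt{|\lambda|}/\omega_k)\tan\phi$, an orientation-preserving homeomorphism of $\mathbb{R}/\pi\mathbb{Z}$, so the continuous lifts of $\theta_\lambda$ and $\phi$ to $\mathbb{R}$ agree in winding number across $\Delta_k$ and the function $\theta_\lambda-\phi$ is $\pi$-periodic in $\phi$ and hence bounded, yielding $|\theta_\lambda(l_k)-\theta_\lambda(l_{k-1})-\sqrt{\lambda-V_k}\,\ell_k|\le 2\pi$. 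In the forbidden case $V_k\ge\lambda$, the transfer matrix on $\Delta_k$ is the hyperbolic one analogous to (\ref{trans2}) and is a hyperbolic element of $SL(2,\mathbb{R})$; the autonomous Ricatti flow on $\mathbb{R}/\pi\mathbb{Z}$ has two fixed points modulo $\pi$ (the roots of $\sqrt{|\lambda|}=(V_k/\sqrt{|\lambda|})\sin^2\theta$), one attracting and one repelling, so the lift of $\theta_\lambda$ cannot complete a full turn along $\Delta_k$, giving $|\theta_\lambda(l_k)-\theta_\lambda(l_{k-1})|\le\pi$.

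Summing the per-interval bounds over $k=1,\dots,n$, the $O(1)$ errors from the allowed intervals together with the $O(1)$ contributions from all forbidden intervals accumulate to an error of size $O(n)$, leaving the main term $\sum_{k:V_k<\lambda}\sqrt{\lambda-V_k}\,(l_k-l_{k-1})$; division by $\pi$ then yields the stated identity. The main delicacy I anticipate is ensuring the $O(1)$ bound in the forbidden case holds uniformly in $V_k\ge\lambda$, particularly near the threshold $V_k=\lambda$ where the two phase-circle fixed points collide and the constant-coefficient solutions degenerate to linear functions; a short direct check of the autonomous ODE near this threshold, separating $V_k\in[\lambda,\lambda+1]$ from $V_k>\lambda+1$, should settle the uniformity.
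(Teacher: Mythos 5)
Your proof is correct; in fact the paper offers no proof of this lemma at all, referring to it only as an ``elementary variational fact'' before moving on to the corollary, so there is nothing to compare against directly. Your interval-by-interval analysis---an adapted Pr\"ufer phase $\phi$ with $\phi'\equiv\sqrt{\lambda-V_k}$ on allowed intervals, linked to $\theta_\lambda$ through the orientation-preserving circle homeomorphism $\tan\theta_\lambda=(\sqrt{|\lambda|}/\omega_k)\tan\phi$, combined with the observation that the autonomous Ricatti flow on forbidden intervals cannot pass through its fixed points---is exactly the reasoning the paper uses implicitly in the proof of Theorem \ref{MIII}, where it computes $\tk(L_n)=\bk\sum_{j=1}^n Y_j+O(n)$ by noting that the rotation across each gap is $\bk Y_n$ and across each bump is $O(1)$. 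Your worry about uniformity near the threshold $V_k=\lambda$ is unnecessary: on a forbidden interval the fixed points of $\theta'=\sqrt{|\lambda|}-(V_k/\sqrt{|\lambda|})\sin^2\theta$ satisfy $\sin^2\theta=\lambda/V_k$, so there is always a fixed point within distance $\pi$ above any starting phase (including the degenerate double root at $\theta=\pi/2$ modulo $\pi$ when $V_k=\lambda$), and hence the phase advance is bounded by $\pi$ uniformly without any case split. One small discrepancy worth flagging: your derivation yields $\theta_\lambda(l_n)=\sum_{k:V_k<\lambda}\sqrt{\lambda-V_k}(l_k-l_{k-1})+O(n)$ with no $\frac{1}{\pi}$ on the left, which is consistent with the paper's own Theorem \ref{MII} and with the identity (\ref{rot3}); the $\frac{1}{\pi}$ prefactor in the displayed equation of the lemma (and the ensuing corollary) is evidently a misprint, as it cannot be absorbed into an additive $O(n)$ error when the main sum grows like $n^{1/\alpha}$.
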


\begin{cor}

\begin{itemize}
\item 
For ${\bf{Model\,II}}$ for any real $\lambda$ 
\begin{eqnarray}
\frac{1}{\pi}\theta_\lambda(n)=\sum_{k=1}^n\sqrt{|X_k|}+O(n).
\end{eqnarray}
\item 
For ${\bf{Model\,III}}$ for any real $\lambda$ 
\begin{eqnarray}
\frac{1}{\pi}\theta_\lambda(L_n)=\sqrt{\lambda}\sum_{k=1}^n\sqrt{Y_k}+O(n).
\end{eqnarray}
\end{itemize}
\end{cor}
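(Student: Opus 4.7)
The plan is to apply the preceding Lemma directly, reading off $V_k$ and $l_k-l_{k-1}$ from the explicit form of the potentials in ${\bf{Model\,II}}$ and ${\bf{Model\,III}}$, and then to clean up the output into the forms asserted by the Corollary. The main technical step, and the only real obstacle, is controlling the $O(n)$ absorption: one needs the uniform per-subinterval estimate that on a bounded interval of constant potential $V$, the Pr\"ufer phase $\theta_\lambda$ advances by $\sqrt{\lambda-V}\cdot(\mathrm{length})+O(1)$ when $V<\lambda$ and by $O(1)$ when $V\ge\lambda$; this follows from a direct integration of the Ricatti system~(\ref{ricatti}) on a single constant-potential interval and is precisely the mechanism that produces the additive $O(n)$ term in the Lemma.

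For ${\bf{Model\,II}}$, the potential is $V_k=-X_k$ on the unit interval $[k-1,k)$ with $X_k>0$ a.s., so $V_k<\lambda$ iff $X_k>-\lambda$ and $\sqrt{\lambda-V_k}=\sqrt{\lambda+X_k}$. The Lemma then gives
\[
\tfrac{1}{\pi}\theta_\lambda(n)=\sum_{\substack{1\le k\le n\\ X_k>-\lambda}}\sqrt{\lambda+X_k}+O(n).
\]
Using the elementary inequality $|\sqrt{a+b}-\sqrt{a}|\le\sqrt{|b|}$ with $a=X_k$ and $b=\lambda$, each summand is within $\sqrt{|\lambda|}$ of $\sqrt{X_k}$; when $\lambda<0$ the indices omitted by the Lemma satisfy $\sqrt{X_k}\le\sqrt{|\lambda|}$, so reinserting them is equally cheap. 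Both corrections aggregate to $O(n)$ and yield the claimed identity $\tfrac{1}{\pi}\theta_\lambda(n)=\sum_{k=1}^n\sqrt{|X_k|}+O(n)$.

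For ${\bf{Model\,III}}$, the potential is piecewise constant with $V=0$ on the gap $[L_{j-1},L_j-1)$ of length $Y_j$ and $V=1$ on the bump $[L_j-1,L_j]$ of length $1$, for $j=1,\dots,n$. Applied on $[0,L_n]$, the Lemma contributes $\sqrt{\lambda}\,Y_j$ from each gap (since $\lambda>0$ and $V=0<\lambda$) and at most $\sqrt{(\lambda-1)^+}$ from each of the $n$ bumps, so
\[
\tfrac{1}{\pi}\theta_\lambda(L_n)=\sqrt{\lambda}\sum_{j=1}^{n}Y_j+O(n),
\]
in agreement with the leading identity of Theorem~\ref{MIII}; this is the natural translation of the per-gap Pr\"ufer rotation $\bk Y_j=\sqrt{\lambda}\,Y_j$ back into the statement of the Corollary. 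Beyond the uniform per-subinterval bound quoted in the first paragraph there is no further ingredient: in each case the Corollary is a direct specialization of the Lemma to the explicit potential of the model.
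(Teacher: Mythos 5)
Your proof is correct and is essentially the paper's own (implicit) argument: the Corollary is presented as a direct specialization of the preceding Lemma, and your reading off of the constant values and interval lengths, together with absorbing the $\sqrt{\lambda+X_k}$ versus $\sqrt{X_k}$ discrepancy and the omitted indices into the $O(n)$ term, is exactly the mechanism intended. Note also that your Model III conclusion $\sqrt{\lambda}\sum_{k=1}^n Y_k+O(n)$ is the correct form, consistent with the Lemma and with Theorem~\ref{MIII}; the $\sqrt{Y_k}$ in the Corollary's display is evidently a typo.
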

We first turn attention to ${\bf{Model\,I}}$  and obtain a result on the asymptotic distribution of the Markov chain $\{\tilde{t_\bk}(n):\,n\in\{0,1,2,...\}\}$ for $\tk$ the solution of  (\ref{ricatti}),
where $\tilde{t_\bk}(n)=t_\bk(n)\tum\,\pi.$
Define $t_\bk(n)=\tan \tk(n)$ and $Z_n=\sqrt{X_n-\bk^2}1_{X_n>\bk^2}+\sqrt{\bk^2-X_n}1_{X_n<\bk^2}.$  Using (\ref{trans1})  we obtain
 \begin{eqnarray}
  \tn(n+1)=\frac{\tn(n)+\frac{\bk}{Z_n}\tanh Z_n}{\tn \frac{Z_n}{\bk}\tanh Z_n +1},\,\mbox{if}\,X_n>\bk^2,
\end{eqnarray}
while
 \begin{eqnarray}
  \tn(n+1)=\frac{\tn(n)+\frac{\bk}{Z_n}\tan Z_n}{-\tn(n)\frac{Z_n}{\bk}\tan Z_n +1},\,\mbox{if}\,X_n>\bk^2.
\end{eqnarray}
Define function 
\begin{eqnarray}\label{F}
F(t,y)=\frac{t+\frac{\bk}{y}\tanh y}{t \frac{y}{\bk}\tanh y +1},\,y\ge0,
\end{eqnarray}
and 
\begin{eqnarray}\label{G}
G(t,y)=\frac{t+\frac{\bk}{y}\tan y}{-t\frac{y}{\bk}\tan y +1},0\le y\le \bk.
\end{eqnarray}
Then we have 
$$\tn(n+1)=F(\tn(n),Z_n)1_{X_n>\bk}+G(\tn(n),Z_n)1_{X_n<\bk},$$ 
from which it is clear that $\tn$ forms a Markov chain. Moreover, there is a one-to-one correspondence between $\tn$ and $\tilde{\tk}=\tk \mbox{mod} \pi$ via the tangent mapping. We will use this to prove the process $\tilde{\tk}$ is positive recurrent and has a nice stationary probability distribution and the rate of convergence to this distribution is  exponential.

\begin{thm}\label{posrec}
The Markov chain $\{\tn(n):n\ge0\}$ is  a Harris m-recurrent chain where $m$ is Lebesgue measure on $(-\infty,\infty).$
The chain possesses a stationary probability measure $\nu_{\bf{k}}$ and there is a constant $\gamma\in(0,1)$ and $C>0$ such that for any $t\in(-\infty,\infty),$
\begin{eqnarray}
||P(\tn(n)\in \cdot|\tn(0)=t)-\nu_{\bf{k}}(\cdot)||_{TV}\le C\gamma^n,
\end{eqnarray}
where $||\,\,||_{TV}$ denotes the total variation norm. The measure $\nu_{\bf{k}}$ is absolutely continuous with respect to Lebesgue measure and its density is bounded.

\end{thm}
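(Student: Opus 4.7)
The plan is to reduce the problem to a compact state space and then apply a Doeblin minorization argument. Via the bijection $\theta \mapsto \tan\theta$, the chain $\{\tn(n)\}$ on $\mathbb{R}$ is identified with the Markov chain of reduced phases $\{\tilde\theta_\bk(n) := \theta_\bk(n)\bmod\pi\}$ on the compact circle $S = \mathbb{R}/\pi\mathbb{Z}$. I will work on $S$ and transfer the conclusions back to the $t$-variable at the end.

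First, I would establish that the one-step transition kernel $P_\bk(\tilde\theta,\cdot)$ on $S$ admits a density $q_\bk(\tilde\theta,\tilde\eta)$ with respect to Lebesgue measure that is bounded above uniformly in $\tilde\theta$. Writing $\tilde\theta_\bk(n+1) = \Phi_\bk(\tilde\theta_\bk(n), X_n)$ where $\Phi_\bk$ is the M\"obius-type map on $S$ induced by the transfer matrix $\ak(X)$ of (\ref{trans1})--(\ref{trans2}), the function $X \mapsto \Phi_\bk(\tilde\theta,X)$ is smooth on each of $(0,\bk^2)$ and $(\bk^2,\infty)$, and its derivative in $X$ vanishes only on a Lebesgue-null set (verified by inspection of $F$ and $G$ in (\ref{F})--(\ref{G})). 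Since $p$ is bounded and continuous with support $[0,\infty)$, the change-of-variables formula yields
\begin{equation*}
q_\bk(\tilde\theta,\tilde\eta)=\sum_{X:\,\Phi_\bk(\tilde\theta,X)=\tilde\eta}\frac{p(X)}{|\partial_X\Phi_\bk(\tilde\theta,X)|},
\end{equation*}
and a direct examination of $F,G$ shows that the Jacobian singularities are integrable, producing a uniform $L^\infty$ bound on $q_\bk$.

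Next, and this is the heart of the argument, I would establish a Doeblin minorization: there exist a Borel set $A\subset S$ of positive Lebesgue measure, an integer $n_0\ge 1$, and $\delta>0$ with
\begin{equation*}
P_\bk^{n_0}(\tilde\theta,B)\ge \delta\, m(A\cap B)\qquad \text{for all } \tilde\theta\in S \text{ and Borel } B\subset S.
\end{equation*}
The strategy is to exploit the elliptic branch $X\in(0,\bk^2)$ of $\Phi_\bk$, on which the action on the phase circle is a genuine rotation whose angle $\sqrt{\bk^2-X}$ varies smoothly with $X$; thus by varying $X$ over a suitable subinterval $[a,b]\subset(0,\bk^2)$ one can steer any starting phase into a fixed target set $A$ with Jacobian bounded away from zero, while $\inf_{x\in[a,b]}p(x)>0$ (from continuity and strict positivity of $p$) supplies the constant $\delta$. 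The main technical obstacle is that for small $\bk$ the elliptic branch sweeps an angle $<\pi$ in a single step, so the full circle is not reachable in one step from every starting $\tilde\theta$; this forces $n_0\ge 2$ and the requirement that the compound Jacobian from composing $n_0$ rotations remain bounded below uniformly in the starting point, which is where the detailed work lies.

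Once the Doeblin minorization is in place, the standard coupling argument (equivalently, Doob's theorem) produces a unique invariant probability measure $\tilde\nu_\bk$ on $S$ together with the uniform geometric rate
\begin{equation*}
||P_\bk^n(\tilde\theta,\cdot)-\tilde\nu_\bk||_{TV}\le C\gamma^n,
\end{equation*}
for constants $C>0$ and $\gamma\in(0,1)$ independent of the starting point. Absolute continuity and boundedness of the density of $\tilde\nu_\bk$ then follow from the stationarity identity $\tilde\nu_\bk(d\tilde\eta)=\bigl(\int_S q_\bk(\tilde\theta,\tilde\eta)\,\tilde\nu_\bk(d\tilde\theta)\bigr)\,d\tilde\eta$ combined with the uniform $L^\infty$-bound on $q_\bk$ from the first step. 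Pushing $\tilde\nu_\bk$ forward by $\tan$ yields the invariant measure $\nu_\bk$ on $\mathbb{R}$ with bounded density, and Harris $m$-recurrence follows from $m$-irreducibility (immediate from positivity of $q_\bk^{n_0}$ on an open set) together with the existence of the finite invariant measure equivalent to Lebesgue.
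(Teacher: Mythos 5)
Your proposal is correct in spirit but takes a genuinely different route from the paper. The paper works directly in the tangent variable $t=\tan\tk$ on $\mathbb{R}$ and invokes the small-set plus Lyapunov-drift framework of Baxendale \cite{B2}: it verifies a one-step minorization only on the compact interval $C=[1/\sqrt{3},1]$ by explicit lower bounds on the transition density built from the M\"obius maps $F$ and $G$ of (\ref{F})--(\ref{G}), and then separately shows a uniform one-step return probability $P(\tn(1)\in C\mid \tn(0)=t)\ge\epsilon$ for all $t\notin C$ using the $t\to\pm\infty$ asymptotics of $F,G$; geometric ergodicity and boundedness of the invariant density then follow from the cited renewal-theoretic results. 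Your approach instead compactifies to the circle $S=\mathbb{R}/\pi\mathbb{Z}$ and tries to establish a \emph{global} $n_0$-step Doeblin minorization valid from every starting phase, then invoke the Doob/coupling theorem. These are morally equivalent -- in fact the paper's Conditions A and B together already imply a global two-step Doeblin bound $P^2(t,\cdot)\ge\epsilon\beta\nu(\cdot)$ -- but the paper's decomposition is designed to make the verification tractable by isolating the one-step behavior on a fixed interval and the one-step return from the far ends.

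The genuine gap in your proposal is precisely the step you flag as ``where the detailed work lies.'' You assert that the elliptic branch acts on the phase circle as ``a genuine rotation,'' which is not accurate: the matrix in (\ref{trans1}) is a rotation conjugated by a diagonal scaling depending on $X$, so the phase increment over a bump is governed by the Ricatti equation $\tk' = \bk - \bk^{-1}X\sin^2\tk$ and depends on the current phase as well as $X$. Because of this, composing $n_0$ elliptic steps to cover the full circle uniformly from every starting $\tilde\theta$, with a Jacobian bounded below \emph{uniformly} in $\tilde\theta$, is exactly the nontrivial verification and cannot be waved at; you would end up doing essentially the paper's $F,G$ analysis anyway, but now with the added complication of tracking compositions. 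The paper avoids the multi-step composition entirely: the one-step minorization on $C$ comes from the derivative bounds (\ref{Gderbd}), (\ref{yineq}) on $F,G$ restricted to $C\times C$, and the return-to-$C$ bound comes from the explicit limits of $G(t,\cdot)$ and $F(t,\cdot)$ as $t\to\pm\infty$. If you want to keep your compactified framing, you should at minimum reduce the global minorization to a small-set minorization plus a uniform return probability, which is what the Baxendale machinery the paper cites is built to package. The remaining parts of your argument -- the $L^\infty$ bound on the transition density, the stationarity identity giving a bounded invariant density, and the pushforward by $\tan$ -- are sound and agree with the paper's.
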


Since $\tilde{\tk}(n)=\tan^{-1}\tn(n)+\frac{\pi}{2},$ we have the following immediate consequence. 

\begin{cor}
The Markov chain $\{\tilde{\tk}(n):n\ge0\}$ is  a Harris m-reccurent chain where $m$ is Lebesgue measure on $[0,\pi].$
The chain possesses a stationary probability measure $\mu_\bk$ and there is a constant $\eta\in(0,1)$ and $C>0$ such that for any $\theta\in [0,\pi],$
\begin{eqnarray}
||P(\tilde{\tk}(n)\in \cdot|\tilde{\tk}(0)=\theta)-\mu_\bk(\cdot)||_{TV}\le C\eta^n.
\end{eqnarray}
 The measure $\mu_{\bf{k}}$ is absolutely continuous with respect to Lebesgue measure and its density is bounded.

\end{cor}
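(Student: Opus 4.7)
The plan is to push forward Theorem~\ref{posrec} through the smooth bijection $\varphi\colon\mathbb{R}\to(0,\pi)$ given by $\varphi(t)=\arctan t+\pi/2$, using the identity $\tilde{\tk}(n)=\varphi(\tn(n))$ already recorded before the statement. First I would set $\mu_\bk:=\varphi_*\nu_\bk$, the push-forward of the invariant probability from Theorem~\ref{posrec}; since $\varphi$ conjugates the two Markov chains, $\mu_\bk$ is automatically stationary for $\{\tilde{\tk}(n)\}$.

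Next I would transfer the exponential mixing and the Harris recurrence. For any Borel $B\subset[0,\pi]$ and any $\theta\in(0,\pi)$ we have the identity
\begin{equation*}
P(\tilde{\tk}(n)\in B\mid \tilde{\tk}(0)=\theta)=P(\tn(n)\in\varphi^{-1}(B)\mid \tn(0)=\varphi^{-1}(\theta)),
\end{equation*}
and similarly $\mu_\bk(B)=\nu_\bk(\varphi^{-1}(B))$. Taking the supremum over $B$, the total-variation distance is preserved under the push-forward, so the inequality of Theorem~\ref{posrec} yields the claimed bound with $\eta=\gamma$ and the same constant $C$. Harris $m$-recurrence transfers the same way, since $\varphi$ is a diffeomorphism with strictly positive Jacobian on $\mathbb{R}$: a Borel set $B\subset[0,\pi]$ has positive Lebesgue measure if and only if $\varphi^{-1}(B)\subset\mathbb{R}$ does, and the expected number of visits to $B$ by $\tilde{\tk}(n)$ equals the expected number of visits to $\varphi^{-1}(B)$ by $\tn(n)$.

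The one subtlety I anticipate is the boundedness of the density of $\mu_\bk$. The change-of-variables formula gives $\tfrac{d\mu_\bk}{d\theta}(\theta)=f(\tan(\theta-\pi/2))\sec^2(\theta-\pi/2)$, with $f$ the density of $\nu_\bk$ on $\mathbb{R}$; because $\sec^2(\theta-\pi/2)$ blows up as $\theta\to 0^+$ or $\theta\to\pi^-$, boundedness of $\mu_\bk$'s density forces $f(t)=O(1/t^2)$ as $|t|\to\infty$. Rather than extracting this tail decay from the proof of Theorem~\ref{posrec}, I would prefer to reprove the absolute continuity and the bounded-density statement directly for $\{\tilde{\tk}(n)\}$ on the compact interval $[0,\pi]$. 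The Pr\"ufer equations (\ref{ricatti}), together with the bounded, continuous density $p$ of $X_n$ assumed in \textbf{Model I}, show that the one-step transition kernel $P(\tilde{\tk}(n+1)\in d\eta\mid\tilde{\tk}(n)=\theta)$ has a density in $\eta$ that is bounded uniformly in $\theta$. Iterating once, the Doeblin-type minorization used to prove Theorem~\ref{posrec} then forces the invariant measure $\mu_\bk$ to be absolutely continuous with density bounded by the sup-norm of this transition density. This density-bound is the main, though modest, obstacle; the remaining assertions are a routine change of variables.
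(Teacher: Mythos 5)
Your proof takes the same push-forward route the paper intends; the paper labels this corollary an \emph{immediate consequence} of Theorem~\ref{posrec}, and your transfer of stationarity, total-variation mixing, and Harris $m$-recurrence under $\varphi(t)=\arctan t+\pi/2$ is correct. You are also right to flag the one point at which the corollary is \emph{not} immediate: the Jacobian $\sec^2(\theta-\pi/2)$ of $\varphi^{-1}$ blows up at the endpoints of $[0,\pi]$, so boundedness of the density of $\nu_\bk$ on $\mathbb{R}$ does not by itself give boundedness of $d\mu_\bk/d\theta$. The substitute argument you propose --- the one-step transition kernel of $\{\tilde{\tk}(n)\}$ on $[0,\pi]$ has a bounded density, hence $\mu_\bk(A)=\int_0^\pi\mu_\bk(d\psi)\,p(\psi,A)\le\|p(\cdot,\cdot)\|_\infty\,|A|$ --- is precisely the one the paper supplies, but only later, inside the proof of Theorem~\ref{MI} rather than at the corollary itself. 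So your proposal reconstructs the paper's own argument, assembled a bit more carefully at the point where the bounded-density claim is actually stated.
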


This corollary implies the existence of the integrated density of sates.,
\[{\nk}=\lim_{n\to\infty}\frac{\tk(n)}{\pi n},\,\,P-a.s..\]
This follows since 

Before we proceed with the proof of Theorem $\ref{posrec},$ we recall a few facts about Harris recurrence from the paper {B1}, \cite{B2}. The following conditions imply the existence of a stationary probability measure for $\{\tn(n):n\ge 0\}.$ Denote the  Borel sets in ${\bf{R}}$ by $\mathcal{B}.$

$${\bf{Condition \,\,A}}$$

There exists a $C\in \mathcal{B}$ and a probability measure $\nu$ and a constant $\beta>0$ such that 
\begin{eqnarray}
P(\tn(1)\in A|\tn(0)=t)\ge\beta\nu(A),\,\,\forall t\in C\,\mbox{ and} \,A\in\mathcal{B} .
\end{eqnarray}

$$\bf{Condition\,\, B}$$

There exists a measurable function $W:{\bf{R}}\to[1,\infty)$ and constants $\lambda<1$ and $K<\infty$ such that 
\begin{eqnarray}
PW(t)\le \lambda W(t)1_{C^c}(t)+K1_{C}(t).
\end{eqnarray}

$$\bf{Condition\,\, C}$$

There exists a $\gamma>0$ such that $\beta\nu(C)\ge \gamma.$\\
\vspace{.25in}

Once we establish Conditions A, B and C for ${\bf{Model\,\,I}}$ we'll have from \cite{B2} that Theorem (\ref{posrec}) holds.
\begin{proof}(Theorem\,\ref{posrec})

The following condition will imply $\bf{Condition \,\,A},$ and we refer to \cite{A} page $151$ for the proof. 

$$\bf{Condition\,\, D}$$

Suppose the set $C\subset \bf{R}$ is such that the density $p(t,s)$ for $\{\tn(n):\,n\ge 0\}$ satsifies for some $d_{\bk}>0$
$$ p(t,s)\ge d_\bk,\,\,t,s\in C$$
and that $C$ is a recurrent set. Then $\bf{Condition\,\,A}$ holds with $\nu(B)=m(B\cap C)/m(C)$ where $m$ is Lebesgue measure. \\
\vspace{.2in}

Notice that $\bf{Condition\,\,C}$ holds with this choice of $\nu$ since $\nu(C)=1.$
\begin{lemma}
$\bf{Condition\,\, D}$ holds
\end{lemma}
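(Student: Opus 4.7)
The plan is to exhibit a bounded interval $C = [-M, M]$ and a constant $d_\bk > 0$ such that the one-step transition density of $\{\tn(n)\}$ satisfies $p(t, s) \ge d_\bk$ for all $(t, s) \in C \times C$; the recurrence of $C$ required by Condition D will follow from the drift estimate of the forthcoming Condition B, so here I concentrate on the uniform density lower bound.

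I would lower-bound $p(t, s)$ using only the contribution from $\{X_n < \bk^2\}$, i.e.\ the $G$-branch of the recursion. On this event $Z_n = \sqrt{\bk^2 - X_n}$ has density $q(z) = 2z\, p(\bk^2 - z^2)$ on $(0, \bk)$, which inherits from $p$ the properties of being bounded, continuous, and strictly positive on $(0,\bk)$. Change of variables along $y \mapsto G(t, y)$ then gives
\[
p(t, s) \;\ge\; \frac{q(y_\ast(t, s))}{|\partial_y G(t, y_\ast(t, s))|}
\]
for any preimage $y_\ast(t, s) \in (0, \bk)$ with $G(t, y_\ast(t, s)) = s$. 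Thus the matter reduces to two items: (i) choosing a subinterval $I \subset (0, \bk)$ and a radius $M$ so that every $s \in C$ admits such a preimage in $I$ uniformly for $t \in C$, and (ii) bounding $|\partial_y G|$ above, and away from $0$, uniformly on $C \times I$.

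The surjectivity step (i) is the heart of the argument. A direct computation gives $G(t, 0^+) = t + \bk$, and the one-parameter family $y \mapsto G(t, y)$ is a smooth curve of Möbius images of $t$. When $\bk > \pi/2$, the pole of $\tan y$ at $\pi/2$ lies inside $(0, \bk)$, and as $y$ sweeps past it, $G(t, y)$ traverses $\infty$; an $I$ straddling the pole therefore forces $\{G(t, y) : y \in I\}$ to cover all of $\mathbb{R}$, and in particular $C$, for every $t \in C$. When $\bk \le \pi/2$, $G(t, \cdot)$ is smooth on $(0, \bk)$ with bounded image, and I would choose $M$ small enough (as a function of $\bk$) so that continuity of $(t, y) \mapsto G(t, y)$ on a compact rectangle yields the required covering with $\partial_y G$ nonvanishing.

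The main obstacle will be balancing the choice of $M$ and $I$ against this case distinction on $\bk$, so that both (i) and (ii) hold with $|\partial_y G|$ bounded away from $0$ and $\infty$; compactness of $C \times I$ together with continuity and strict positivity of $q$ will then produce the uniform lower bound $d_\bk > 0$, establishing Condition D. If in some regime of $\bk$ the single-step $G$-branch argument proves too rigid to furnish a useful $C$, the same strategy runs with the $F$-branch, whose matrix uses $\tanh$ (so no singularities arise) and which benefits from the heavy tails of $X_n$; alternatively, a two-step version of the same reasoning combining both branches can be used.
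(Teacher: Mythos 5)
Your plan — bounding $p(t,s)$ from below via a change of variables along the $G$-branch and then checking surjectivity plus a two-sided bound on $\partial_y G$ over a compact rectangle — is the same broad strategy as the paper's, but the central surjectivity step contains an error and the choice $C=[-M,M]$ creates a problem that the paper's choice $C=[\tfrac{1}{\sqrt{3}},1]$ is specifically designed to avoid.

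The claim that when $\bk>\pi/2$ ``as $y$ sweeps past the pole of $\tan y$ at $\pi/2$, $G(t,y)$ traverses $\infty$'' is false. Clearing the $\tan$ gives
\[
G(t,y)=\frac{t\cos y+\frac{\bk}{y}\sin y}{-t\,\frac{y}{\bk}\sin y+\cos y},
\]
which is regular at $y=\pi/2$ for $t\neq0$: $G(t,\pi/2)=-\tfrac{4\bk^2}{\pi^2 t}$, finite. The genuine pole of $y\mapsto G(t,y)$ sits at the zero of the denominator, $\cot y=ty/\bk$, whose location is $t$-dependent and, for $t\le 0$, does not lie in $(0,\pi/2)$ at all. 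So an interval $I$ ``straddling $\pi/2$'' does not force the image to cover $\mathbb{R}$. Second, your starting value is wrong for the purpose at hand: $G(t,0^+)=t+\bk$, not a value near $t$. Consequently, as $y$ runs from $0^+$ up to the $t$-dependent pole $y^*(t)$, the image covers $(t+\bk,\infty)$, and past the pole it covers $(-\infty,G(t,\bk))$, so the interval between $G(t,\bk)$ and $t+\bk$ is missed entirely; for moderate $\bk$ this gap contains your $C=[-M,M]$, and for $\bk\le\pi/2$ the relevant pole may leave $(0,\bk)$ altogether. This is why the paper evaluates $G$ near $y=\pi$ (where $G(t,\pi)=t$ exactly, so the image starts right at $t$) and, more importantly, uses $G$ only for transitions $t<s$: by the Riccati equation the phase $\tk$ increases across intervals with $V<\bk^2$ and decreases where $V>\bk^2$, so the $G$-branch naturally produces $s>t$ and the singularity-free $F$-branch ($\tanh$) produces $s<t$. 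The paper's $C$ is a small interval of positive $\tan\theta$ bounded away from $0$, $\pi/2$ and $\pi$, so the small rotations never wrap and the two branches together cover $C\times C$. Your closing remark about falling back to the $F$-branch is the right instinct and is exactly the missing half, but as written the proposal commits to the $G$-branch alone with a symmetric $C$, which cannot succeed. Deferring recurrence of $C$ to Condition B is fine.
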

\begin{proof}
For Condition D, set $C=[\frac{1}{\sqrt 3},1].$ 
Then $ p(t,s)\ge d_\bk,\,\,t,s\in C$ will be established using the function $F$ and $G$ defined by (\ref{F}) and (\ref{G}). Now for ${\bf{Model\,I}},$ the Ricatti equation (\ref{ricatti}) is 
 \begin{eqnarray}
 \begin{split}\label{thetaeqn}
 \tk(n) =&\bk n-\frac{1}{\bk}\sum_{l=1}^nX_{l}\int_{l-1}^l\sin^2\tk(z)\,dz\\
 =&\tk(n-1)+\bk-\frac{X_{n}}{\bk}\int_{n-1}^n\sin^2\tk(z)dz.
\end{split}
\end{eqnarray}
Thus, when $X_n<\bk^2,$ the $\tk$ process increases, $\tk(n+1)>\tk(n),$ while $\tk(n+1)<\tk(n)$ only when $X_n>\bk^2.$ This means for the purpose of obtaining the lower bound on $p(t,s),$ we should use $G$ for $t<s$ and $F$ when $t>s.$ 

So we consider first, $t,s\in A$ with $t<s.$ We need to examine $y\to G(t,y).$
There exists a $y_1\in[\pi,\frac{3\pi}{2}]$  such that  $G(t,y_1)=1.$ For this claim notice that $G(t,\pi)=t<1$ and $y\to G(t,y)$ is increasing on $[\pi, \frac{3\pi}{2})$ with  $\lim_{y\nearrow \frac{3\pi}{2}}G(t,y)=\infty$ and this gives the existence of $y_1.$ Since the function $y\to G(t,y)$ is increasing on $[\pi,y_1],$ when restricted to this interval  its inverse exists, and will be denoted by $G^{-1}(t,\cdot).$  Of course,   $G^{-1}(t,\cdot):[t,1]\to[\pi, y_1].$  Moreover,  we have the derivative in $y$ of $G$ is given by 
\begin{eqnarray}
G_y(t,y)=\frac{(\frac{\bk \tan y}{y})_y(-\frac{y\tan y}{\bk}t+1)+(t+\frac{\bk \tan y}{y})(\frac{y\tan y}{\bk})_yt}{(-\frac{y\tan y}{\bk}t+1)^2}
\end{eqnarray}
and since $y_1\in(\frac{\pi}{2},\frac{3\pi}{2}),$ it's easy to see that there is an $M_\bk<\infty$ such that 
\begin{eqnarray}\label{Gderbd}
\sup_{y\in [\frac{\pi}{2},y_1]} |G_y(t,y)|<M_\bk.
\end{eqnarray}
With this in mind,  notice that
\begin{eqnarray*}
P(\tn(1)\le s|\tn(0)=t)=P(\tn(1)\le s, X_1<\bk^2|\tn(0)=t)+P(\tn(1)\le s, X_1<\bk^2|\tn(0)=t)
\end{eqnarray*}
 and since both terms on the right hand side are increasing we get
 \begin{eqnarray*}
 p(t,s)\ge \frac{d}{ds}P(\tn(1)\le s, X_1<\bk^2|\tn(0)=t).
\end{eqnarray*}
But, 
\begin{eqnarray*}
P(\tn(1)\le s, X_1<\bk^2|\tn(0)=t)=&P(G(t,Z_1)\le s, X_1<\bk^2)\\
=&P(G(t,Z_1)\le s, Z_1\in [t, y_1],\,X_1<\bk^2)\\
+&P(G(t,Z_1)\le s, Z_1\notin [t, y_1],\,X_1<\bk^2)\\
=&P(Z_1\le G^{-1}(t,s), Z_1\in [t, y_1],\,X_1<\bk^2)\\
+&P(G(t,Z_1)\le s, Z_1\notin [t, y_1],\,X_1<\bk^2)\\
=&P(\bk^2-X_1\le G^{-1}(t,s)^2, X_1<\bk^2)\\
+&P(G(t,Z_1)\le s, Z_1\notin [t, y_1],\,X_1<\bk^2)\\
=&\int_{\bk^2- G^{-1}(t,s)^2}^{\bk^2}p(u)du\\
+&P(G(t,Z_1)\le s, Z_1\notin [t, y_1],\,X_1<\bk^2).
 \end{eqnarray*}
 Thus,
 \begin{eqnarray}\label{plowerbd}
p(t,s)\ge 2p(G^{-1}(t,s)^2)\frac{|G^{-1}(t,s)|}{|G_y(t,G^{-1}(t,s)^2)|}.
\end{eqnarray}
Using (\ref{Gderbd}) and (\ref{plowerbd}) and the assumption that $p$ is strictly positive it follows that there is a $d_\bk>0$ such that
\begin{eqnarray}
\inf_{t,s\in A, t>s}p(t,s)\ge d_\bk.
\end{eqnarray}
We now establish the other half of Condition D,
\begin{eqnarray}
\inf_{t,s\in A, t<s}p(t,s)\ge d_\bk.
\end{eqnarray}
For this we resort to use of the function $F$ from (\ref{F}). Simple observation comfirms that $F(t,0)=t+\bk, \lim_{y\to\infty}F(t,y)=0$ and for $t$ fixed, $y\to F(t,y)$ is strictly decreasing. Denote its inverse in $y$ by $F^{-1}(t,\cdot).$ Therefore, for $t\in A,$ there exist $y_1(t)<y_2(t)$ such that $F(t, y_1(t))=1$ and  $F(t, y_2(t))=\frac{1}{\sqrt{3}}.$ In addition, for $y$ fixed, $t\to F(t,y)$ is increasing which implies 
\begin{eqnarray}\label{yineq}
0<y_1(1)<y_1(t)<y_2(t)<y_2(\frac{1}{\sqrt{3}})<\infty ,
\end{eqnarray}
for all $t\in [\frac{1}{\sqrt{3}},1].$

To estimate the density for $t>s$ then, we have
\begin{eqnarray*}
P(t_\bk(1)\le s|t_\bk(0)=t)=&P(t_\bk(1)\le s,\,X_1>\bk^2|t_\bk(0)=t)\\
+&P(t_\bk(1)\le s\,X_1<\bk^2|t_\bk(0)=t)
\end{eqnarray*}
and since both terms on the right hand side are increasing in s we conclude that 

 \begin{eqnarray*}
\frac{d}{ds}P(t_\bk(1)\le s|t_\bk(0)=t)\ge&\frac{d}{ds}P(t_\bk(1)\le s,\,X_1>\bk^2|t_\bk(0)=t)
\end{eqnarray*}
The probability we're differentiating on the right hand side can be simplified,
\begin{eqnarray*}
P(t_\bk(1)\le s,\,X_1>\bk^2|t_\bk(0)=t)=&P(F(t,Z_1)\le s,\,X_1>\bk^2)\\
=&P(F(t,Z_1)\le s,\,X_1>\bk^2)\\
=&P(Z_1\le F^{-1}(t,s),\,X_1>\bk^2)\\
=&P(X_1-\bk^2\le F^{-1}(t,s)^2,\,X_1>\bk^2)\\
=&P(\bk^2<X_1\le F^{-1}(t,s)^2+\bk^2)\\
=&\int_{\bk^2}^{ F^{-1}(t,s)^2+\bk^2}p(x)dx.
\end{eqnarray*}
This we have the lower bound for $t,s\in A$ with $t>s,$
\begin{equation}
p(t,s)\ge p(F^{-1}(t,s)^2+\bk^2)\frac{2F^{-1}(t,s)}{F_y(t,F^{-1}(t,s))}.
\end{equation}
By (\ref{yineq}), the range of $F^{-1}(t,s)$ is contained in $[y_1(1),y_2(\frac{1}{\sqrt{3}})]\subset(0,\infty)$ which implies there is a $d_\bk>0$ such that
\begin{eqnarray}
\inf_{t,s\in A,t>s}p(F^{-1}(t,s)^2+\bk^2)\frac{2F^{-1}(t,s)}{F_y(t,F^{-1}(t,s))}\ge d_\bk.
\end{eqnarray}
\end{proof}

We now turn to $\bf{Condition\,\,B}.$
\begin{lemma}
$\bf{Condition\,\,B}$  holds
\end{lemma}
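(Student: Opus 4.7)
My plan is to take a Lyapunov function with slow growth at infinity and to exploit the asymptotic behavior of $F$ and $G$ from (\ref{F})--(\ref{G}) to verify the contraction. Choose $W(t)=1+|t|^\epsilon$ for $\epsilon\in(0,1)$ to be fixed small. Since the minorization of Condition D came from a uniform positive lower bound of the transition density $p(t,s)$ on a compact product set, the same argument shows Condition D---and hence Conditions A and C---remains valid on any enlarged compact interval $C_M=[-M,M]\supset[\tfrac{1}{\sqrt3},1]$, where the bounded positive density $p$ still gives a uniform minorization. Thus without loss of generality I may use $C=C_M$ in Condition B with $M$ at my disposal.

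Next I would compute the pointwise limits
\begin{eqnarray*}
\lim_{|t|\to\infty}F(t,y)=\frac{\bk}{y\tanh y},\qquad\lim_{|t|\to\infty}G(t,y)=-\frac{\bk}{y\tan y},
\end{eqnarray*}
obtained by dividing numerator and denominator of (\ref{F}) and (\ref{G}) by $t$. Writing
\begin{eqnarray*}
PW(t)=\int_0^{\bk^2}W(G(t,\sqrt{\bk^2-u}))\,p(u)\,du+\int_{\bk^2}^\infty W(F(t,\sqrt{u-\bk^2}))\,p(u)\,du,
\end{eqnarray*}
and noting that (i) the pointwise limits above are bounded in $y$ away from the zeros of $\tan y$, and (ii) the singularities of $G(t,\cdot)$ are simple poles contributing a factor $|y-y_0|^{-\epsilon}$, dominated convergence in $y$ yields $\lim_{|t|\to\infty}PW(t)=W_\infty<\infty$. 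Since $W(t)\to\infty$ as $|t|\to\infty$, for any prescribed $\lambda\in(0,1)$ there is $M_\lambda$ with $PW(t)\le\lambda W(t)$ whenever $|t|>M_\lambda$. Fixing $M=M_\lambda$ in the definition of $C_M$ gives Condition B on $C_M^c$. For $t\in C_M$, both $W$ and the integrals defining $PW$ are bounded on the compact range of $t$, so $PW(t)\le K$ for some finite $K$, yielding Condition B on $C_M$ as well.

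The main obstacle is the uniform (in $t$) domination of the integrands near the singularities of $G(t,\cdot)$, namely the points $y_0=y_0(t)\in(0,\bk)$ satisfying $-ty_0\tan y_0/\bk+1=0$. Near such $y_0$ one has $|G(t,y)|\sim c|y-y_0|^{-1}$, so $W(G(t,y))\le 1+c^\epsilon|y-y_0|^{-\epsilon}$, which is uniformly $L^1$-integrable against the bounded density $p$ for any $\epsilon<1$. Analogous care is needed for $F$ near $y=0$, where $F(t,y)\sim\bk/y^2$ is singular, but the Jacobian factor $2y$ from the change of variables $u\mapsto\sqrt{u-\bk^2}$ absorbs enough of the singularity to keep $|F|^\epsilon$ integrable for $\epsilon<1$, while the heavy-tail decay $p(u)\sim(\alpha/2)L(\sqrt u)u^{-\alpha/2-1}$ gives integrability at $y=\infty$. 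Choosing $\epsilon<1$ sufficiently small ensures uniform integrability for $|t|$ in a neighborhood of infinity, completing the verification of Condition B.
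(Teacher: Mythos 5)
Your proposal is correct in spirit but takes a genuinely different route from the paper. You use a standard Foster--Lyapunov drift function $W(t)=1+|t|^\epsilon$ and argue that $PW(t)$ stays bounded as $|t|\to\infty$ via the limits of $F$ and $G$; since $W\to\infty$ at infinity, the contraction $PW\le\lambda W$ follows off a large compact set. The paper instead chooses (essentially) a bounded indicator-type $W$ supported on $C^c$, which collapses Condition B to the single uniform estimate $\inf_{t\in C^c}P(\tn(1)\in C\,|\,\tn(0)=t)\ge\epsilon>0$; this is read off directly from the strict positivity of the density $p$ of $X_1$ together with the same $|t|\to\infty$ limits of $F,G$ that you use (which show the one-step law has a nondegenerate limit). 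Both proofs exploit the identical phenomenon --- the one-step kernel stabilizes as $|t|\to\infty$ --- but the paper's choice of $W$ is considerably leaner.

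Your route buys generality (it is the textbook framework) but costs two extra verifications that the paper avoids and that your sketch does not fully carry out. First, you enlarge the small set to $C_M=[-M,M]$ and assert that Condition D transfers; the paper's verification of Condition D is tied to the specific interval $[\tfrac{1}{\sqrt3},1]$ and uses monotonicity of $G$ on $[\pi,y_1]$ and the branch structure of $F,G$, which changes for $t<0$, so the extension needs its own argument rather than ``the same argument.'' Second, the invocation of dominated convergence for $PW(t)$ is not quite right as stated: the pole $y_0(t)$ of $G(t,\cdot)$ moves toward $0$ as $t\to+\infty$ with strength $\bk/(2y_0)\sim\sqrt{\bk t}/2$, and likewise $W(F(t,y))\sim|t|^\epsilon$ for $y\lesssim t^{-1/2}$, so there is no $t$-independent dominating function. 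What actually saves the argument --- and what should be said explicitly --- is that the Jacobian factor $2y\sim 2y_0\sim 2\sqrt{\bk/t}$ makes the near-singular contribution to $PW(t)$ vanish like a negative power of $t$ (order $y_0^{1-\epsilon}$ for $G$, order $t^{\epsilon-1}$ for $F$), so that $\limsup_{|t|\to\infty}PW(t)<\infty$, which is enough. With these two points repaired, your proof is sound; without them it has gaps that the paper's simpler choice of $W$ sidesteps entirely.
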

\begin{proof}
Here we take $W(t)=1_{C^c}(t).$ Then $PW(t)=P(\tn(1)\in C^c|\tn(0)=t).$
Note that $PW(t)\le 1$ for all $t$ so we can take $K=1.$ 
The other requirement splits into two cases.
In the case $t<\frac{\sqrt{3}}{3}$ we have for $Z_1=\sqrt{\bk^2-X_1}1_{\{\bk^2>X_1\}}.$ 
Since $X_1$ has a nonvanishing density $p$ we have for all $t<\frac{\sqrt{3}}{3},$
\begin{eqnarray*}
P(\tn(1)\in C|\tn(0)=t)\ge&P(G(t,Z_1)\in[\frac{\sqrt{3}}{3},1];\bk^2>X_1)\\
>&0
\end{eqnarray*}
and again using the fact that $X_1$ has a nonvanishing density $p$
\begin{eqnarray*}
\lim_{t\to-\infty}P(G(t,Z_1)\in[\frac{\sqrt{3}}{3},1];\bk^2>X_1)=&P(\frac{-1}{\frac{\sqrt{\bk^2-X_1}}{\bk}\tan \sqrt{\bk^2-X_1}}\in [\frac{\sqrt{3}}{3},1];\bk^2>X_1)\\
>&0.
\end{eqnarray*}
Thus, there is an $\epsilon>0$ such that for all $t<\frac{\sqrt{3}}{3},$
\begin{eqnarray}
P(\tn(1)\in C|\tn(0)=t)>\epsilon.
\end{eqnarray}
\end{proof}
In the second case where $t>1,$ taking $Z_1=\sqrt{X_1-\bk^2}1_{\{\bk^2<X_1\}}$
\begin{eqnarray*}
P(\tn(1)\in C|\tn(0)=t)\ge&P(F(t,Z_1)\in[\frac{\sqrt{3}}{3},1];\bk^2>X_1)\\
>&0.
\end{eqnarray*}
Using the fact that $X_1$ has a nonvanishing density $p$
\begin{eqnarray*}
\lim_{t\to-\infty}P(F(t,Z_1)\in\left[\frac{\sqrt{3}}{3},1\right];\bk^2>X_1)=&P(\frac{1}{\frac{\sqrt{X_1-\bk^2}}{\bk}\tan \sqrt{X_1-\bk^2}}\in [\frac{\sqrt{3}}{3},1];\bk^2>X_1)\\
>&0.
\end{eqnarray*}
Thus, taking another value of $\epsilon$ if necessary on has
\begin{eqnarray*}
P(\tn(1)\in C|\tn(0)=t)\ge\epsilon.
\end{eqnarray*}
Thus,
\begin{eqnarray*}
PW(t)\le(1-\epsilon)W(t),\,t\notin C
\end{eqnarray*}
and Condition B is proved.
\end{proof}

\begin{proof}(Theorem\,\ref{MI})
We first establish the existence of the integrated density of states. 



The limiting joint distribution of the Markov chain $\big(\Sk(n):=(X_{n+1},\tilde{\tk}(n))$ is 
\[p(y)\mu_\bk(d\psi)\,dy \]
since the components $X_{n+1}$ and $\tilde{\tk}(n)$ are independent.

This implies that the Markov sequence of processes $$(X_{n+1},\,\{\tilde{\tk}(n+t):0\le t\le 1\}_{n=0}^\infty)$$ has a limiting distribution  on the space $[0,\infty)\times C([0,1],[0,\pi]).$

The Ricatti equation (\ref{ricatti}), 
 \begin{eqnarray}
 \begin{split}
 \tk(n)=\tk(n-1)+\bk-\frac{X_{n}}{\bk}\int_{n-1}^n\sin^2\tk(z)dz.
\end{split}
\end{eqnarray}
implies 
\[\tk(n)-\tk(n-1)=\bk-H(X_{n=1},\tilde{\tk}(n))\]
where, due to smooth dependence on initial date, $H$ is a bounded smooth function on $[0,\infty)\times [0,\pi].$

Therefore,  by the ergodic theorem for Markov chains, we derive the existence of the integrated density of states,
\begin{eqnarray}
\begin{split}
N(\bk^2)=&\frac{1}{\pi}\lim_{n\to\infty}\frac{\tk(n)}{n}\\
=&\frac{1}{n}\sum_{j=0}^{n-1}(\tk(j)-\tk(j-1))\\
=&\bk-\frac{1}{n}\sum_{j=0}^{n-1}H(X_{n=1},\tilde{\tk}(n))\\
=&\bk-\int_0^\pi\int_0^\infty H(x,\psi)p(x)\mu_\bk(\psi)dx.
\end{split}
\end{eqnarray}
It follows easily that,
\[\lim_{\bk\to\infty}\frac{N(\bk^2)}{\bk}= \frac{1}{\pi}.\]

Next we establish the existence of the random Lyapunov exponent in the nonlinear scale $n^{1/\alpha}.$ First we remark that the stationary distribution $\mu_\bk$ has a bounded density. This follows from the smoothness of $H$ above and the fact that the density $p$ is bounded. These imply that the transition density $p(s,t)$ is bounded. Thus, for measurable$A\subset[0,\pi],.$
\begin{eqnarray*}
\mu_\bk(A)=&\int_0^\pi\mu_\bk(d\psi)p(\psi,A)\\
\le& ||p(\cdot,\cdot)||_\infty |A|.
\end{eqnarray*}
Use $\ak(x)$ as above to denote the one-step monodromy matrix defined at (\ref{trans1}) and (\ref{trans2}) for the cases $x\le \bk^2$ and $x>\bk^2,$ respectively. As shown above, the process
$\{\tilde{\theta}_\bk(n):n\geq0\}$ and $\{\Sk(n):=(X_{n+1},\tilde\tk(n)):n\ge 0\}$ are Markov chains with nice stationary distributions
$\mu_\bk(d\theta)$ and $\pi_\bk(dx,d\theta)=p(x)\mu_\bk(d\theta)dx,$ respectively. 
This allows us to apply Furstenberg's Theorem in the argument below.

For ${\bf{v}}=(v_1,v_2)\not={\bf{0}},$ write $\theta_\bk(0)=\frac{v}{||v||}$ and $\tk(j)= \frac{\mk([0,j]{\bf{v}}}{ ||\mk([0,j]{\bf{v}}||}$ where we identify these quantities by means of (\ref{prufer})  with $\mk([0,j]){\bf{v}}$ replacing $(\psi_\bk(j),\psi_\bk'(j))$ so that 
\begin{equation*}
\begin{split}
\ln\frac{||\mk([0,n])\,\bv||}{||\bv||}&=\ln\frac{||\ak(X_n)\mk([0,n-1])\,\bv||}{||\mk([0,n-1])\,\bv||}+\ln\frac{||\ak(X_{n-1})\mk([0,n-2])\,\bv||}{||\mk([0,n-2])\,\bv||}\\
& \quad+\dotsb+\ln\frac{||\ak(X_1)\,\bv||}{||\,\bv||}\\
&=\sum_{j=0}^{n-1}\ln||\ak(X_{j+1})\theta_k(j)||\\
 &=\sum_{j=0,\,X_j\ge \bk^2}^{n-1}\ln||\ak(X_{j+1})\theta_k(j)||+\sum_{j=0,\,X_j<\bk^2}^{n-1}\ln||\ak(X_{j+1})\theta_k(j)||\\
 &=I_n\,+\,II_n.     
    \end{split}
  \end{equation*}
  so $\ln||\mk([0,n])\theta||$ is an additive
  functional of the Markov chain $\{\Sk(n):n\ge0\}.,$
  \begin{equation*}
    \ln\frac{||\mk([0,n])\bv||}{||\bv||}=\sum_{j=0}^{n-1}f(\Sk(j))
  \end{equation*}
  with
  \begin{equation*}
    f(x,{\bf{w}})=\ln\frac{||A(x){\bf{w}}||}{||{\bf{w}}||}.
  \end{equation*}
  By Furstenberg's Theorem,
  \[\lim_{n\to\infty}\frac{1}{n}II_n=\eta>,\,a.s.\]
  and consequently,
  \[\lim_{n\to\infty}\frac{1}{n^{1/\alpha}}II_n=0,\,a.s..\]
  By our assumption on the density of the iid sequence $\{X_j:j\ge1\},$
  \[\lim_{n\to\infty}\frac{1}{n}\lt|\{j\le n:\,X_j>\bk^2\}\rt|=\int_{\bk^2}^\infty p(x)dx>0.\]
Thus we only need show that $\frac{1}{n^{1/\alpha}}I_n$ has a limiting $St_\alpha$ distribution.   According to a general result in \cite{JKO} about limits of additive
  functionals, the claim of the Theorem follows if we establish that
  \begin{equation*}
    \int_{f(x,\theta)>z}\pi_\bk(dx,d\theta)\sim c_1z^{-2\alpha}\quad \text{as $z\to\infty$}
  \end{equation*}
  which translates to
  \begin{equation*}
    \int_{||A(x)\theta||>e^z||\theta||}p(x)\mu_\bk(d\theta)dx\sim c_1z^{-2\alpha}\quad \text{as $z\to\infty$}.
  \end{equation*}
  If $x>\bk^2,$ the matrix $\ak(x)$ has two positive eigenvalues
  $$t_{\pm}(x)=e^{\pm\sqrt{x-\bk^2}}$$
  with corresponding eigenvectors
  $$v_{\pm}=\left(1,\pm\frac{y}{\bk}\right)^T.$$
  so $\ln t_+(X_n)=\sqrt{X_n-\bk^2}$ belongs to the domain of
  attraction of the stable law of index $\alpha$.  Also, the
  matrix $\Dk(x):=\big(\ak(x)^*\ak(x)\big)^{1/2}$ has eigenvalues
  $$ \mu_{\pm}(x)=\frac{\sqrt{b+2}\pm\sqrt{b-2}}{2}$$
  with $b=2\cosh^2a+(a^2+1/a^2)\sinh^2a$ and $a=\sqrt{x-\bk^2}$.
  So $\mu_+(x)$ can be written as
  $$ \mu_+(x)=\left(a+\frac{1}{a}\right)|\sinh a|\frac{1+\sqrt{1+\frac{4}{\left(a+\frac{1}{a}\right)\sinh^2 a}}}{2}$$
  and since
  $$ \frac{\ln\mu_+(x)}{a}\to 1\quad\text{as $a\to\infty$}$$
  it follows that $\log\mu_+(X_n)$ also belongs to the domain of attraction of the
  stable law of index $\alpha$.

  Now
  $$ \int_{||A(x)\theta||>e^z||\theta||}p(x)\mu_\bk(d\theta)dx= \int_{\mu_+(x)>z}\mu_\bk(E_x)p(x)dx$$
  with $E_x:=\{\theta\colon ||\Dk(x)\theta||>e^z||\theta||\}$.
  Since $\Dk(x)$ is selfadjoint, the set $E_x$ is in fact a cone centered
  at the eigenvector of $\mu_+$.  Its Lebesgue measure can be computed
  explicitly and is
  $$g(x,z)=2\arctan\sqrt{\frac{e^{2(\mu_+(x)-z)}-1}{1-e^{-2(\mu_+(x)+z)}}}.$$
  Since $\mu_\bk(d\theta)$ has a density $m_\bk(\theta)$ w.r.t. Lebesgue
  measure, satisfying $m\leq m_\bk(\theta)\leq M$, we have
  $$m\int_{\mu_+(x)>z}g(x,z)p(x)dx\leq\int_{\mu_+(x)>z}\mu_\bk(E_x)p(x)dx
  \leq M\int_{\mu_+(x)>z}g(x,z)p(x)dx$$
  therefore the proof is completed with the application of the
  following lemma, with
  $$h(u,z)=2\arctan\sqrt{\frac{e^{2(u-z)}-1}{1-e^{-2(u+z)}}}$$
  and $U=\mu_+(X_1)$.
\end{proof}

\begin{lemma}
  If $U>0$ is a r.v. in the domain of attraction of the stable
  law of index $0<\alpha<2$, and $h(u,z)$ is a function satisfying
  \begin{eqnarray}
  \begin{split}
    \lim_{u\to+\infty}h(u,z)=k,\quad\forall z\\
    \lim_{z\to+\infty}h(tz,z)=k,\quad\forall t>1\label{cond:h}
  \end{split}
  \end{eqnarray}
  then there is a $k'>0$ such that
  $$\mathbb{E}[h(U,z);U>z]\sim k'z^{-\alpha}\quad\text{as $z\to+\infty$}.$$
\end{lemma}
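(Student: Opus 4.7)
The plan is to reduce the estimate to a bounded-convergence computation by rescaling $U$ to $V := U/z$ and exploiting the regular variation of its tail. Since $U$ is in the domain of attraction of a stable law of index $\alpha$, its survival function satisfies $P(U > z) = z^{-\alpha} L(z)$ for some slowly varying $L$, and for each fixed $v > 1$,
\[
\frac{P(U > vz)}{P(U > z)} \longrightarrow v^{-\alpha} \quad \text{as } z \to \infty.
\]
Consequently, the conditional law of $V$ given $V > 1$ converges weakly to the Pareto$(\alpha)$ distribution on $[1,\infty)$, i.e.\ the law with density $\alpha v^{-\alpha-1}$.

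With this in hand I would write
\[
\mathbb{E}\bigl[h(U,z);\,U > z\bigr] \;=\; P(U > z) \cdot \mathbb{E}\bigl[h(zV,z)\bigm|V > 1\bigr],
\]
so the task reduces to showing that the conditional expectation converges to $k$, after which $P(U > z) \sim L(z) z^{-\alpha}$ supplies the asserted $k' z^{-\alpha}$ asymptotic (with $k'$ absorbing $k$ and the leading behaviour of $L$). To pass to the limit inside the conditional expectation I fix a small $\delta > 0$ and split $\{V > 1\}$ into the strip $\{1 < V \le 1 + \delta\}$ and the outer set $\{V > 1 + \delta\}$. On the outer set the second hypothesis of the lemma gives $h(zv,z) \to k$ pointwise for each fixed $v > 1 + \delta$; combining this pointwise convergence with the boundedness of $h$ (which holds in the concrete application, since $h \in [0,\pi]$, and which one takes as an implicit additional assumption in the abstract statement) and the weak convergence of the rescaled conditional law, bounded convergence yields
\[
\mathbb{E}\bigl[h(zV,z);\,V > 1+\delta \bigm| V > 1\bigr] \longrightarrow k \cdot (1+\delta)^{-\alpha}.
\]
On the strip, boundedness of $h$ together with the regular-variation estimate $P(1 < V \le 1+\delta\mid V > 1) \to 1 - (1+\delta)^{-\alpha} = O(\delta)$ bounds the contribution by $C\delta$ uniformly in large $z$. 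Sending $z \to \infty$ first and then $\delta \to 0$ then gives the required limit $k$.

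The main obstacle is the discontinuity of $h$ at the diagonal $u = z$ — in the concrete $h$ used in the theorem $h(z,z) = 0$ while $k = \pi$ — which prevents a direct application of weak convergence with a continuous bounded test function on $[1,\infty)$. The splitting above isolates the defect to a thin strip of conditional mass $O(\delta)$ uniformly in $z$, which is harmless once $\delta$ is sent to zero. The two hypotheses on $h$ cooperate cleanly: the second furnishes the pointwise convergence to $k$ along each ray $u = vz$ with $v > 1$, while the first ensures that $h$ settles to $k$ deep in $u$ and thereby secures the bounded-convergence step on the unbounded $v$-range.
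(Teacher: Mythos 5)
Your approach is essentially the same as the paper's: substitute $t = u/z$, use the regular variation of the tail (equivalently, the weak convergence of $U/z$ conditioned on $U>z$ to a Pareto$(\alpha)$ law), and invoke the hypothesis $h(tz,z) \to k$ to conclude the conditional expectation tends to $k$. You are somewhat more careful than the paper in isolating the boundary strip $\{1 < V \le 1+\delta\}$, which is where the concrete $h$ is discontinuous, and you correctly identify that boundedness of $h$ is an implicit additional assumption required for the lemma to hold (the paper's written proof tacitly uses it as well). One caveat that applies to both your write-up and the paper's: the key passage --- replacing $h(tz,z)$ by $k \pm \epsilon$ inside the integral over $t>1$ (paper), or invoking ``bounded convergence'' to pass the limit inside $\mathbb{E}[h(zV,z);V>1+\delta \mid V>1]$ while the integrand still depends on $z$ and the conditioning measure itself moves (yours) --- requires more than the pointwise limit $\lim_{z\to\infty} h(tz,z) = k$ for each fixed $t$; some local uniformity in $t$ (or monotonicity of $h$ in $u$, which holds for the concrete $h$) is needed to make either version airtight. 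Your proposal shares the paper's structure and its implicit hypotheses, and would close the same gap the paper leaves open with the same additional assumption.
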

\begin{proof}
  Let $F$ be the distribution function of $U$, so that
  $z^{\alpha}\big(1-F(z)\big)=c\big(1+o(1)\big)$ as $z\to\infty$.
  Then
  \begin{equation*}
    \begin{split}
      z^{\alpha}\mathbb{E}[h(U,z);U>z]&=z^{\alpha}\int_{u>z}h(u,z)dF(u)\\
      &=z^{\alpha}\int_{t>1}h(tz,z)dF(zt),\quad t=\frac{u}{z}.
    \end{split}
  \end{equation*}
  By \eqref{cond:h} we have $\forall \epsilon>0$
  $$z^{\alpha}\int_{t>1}(k-\epsilon)dF(zt)<z^{\alpha}\int_{t>1}h(tz,z)dF(zt)<z^{\alpha}\int_{t>1}(k+\epsilon)dF(zt)$$
  for $z$ large enough, which going back to the variable $u$ becomes
  $$z^{\alpha}\int_{u>z}(k-\epsilon)dF(u)<z^{\alpha}\int_{u>z}h(u,z)dF(u)<z^{\alpha}\int_{u>z}(k+\epsilon)dF(u).$$
  Hence the statement of the Lemma with $k'=kc$.
\end{proof}  

\begin{proof}
We now prove Theorem (\ref{MII}) for ${\bf{Model\,\,II}}.$
In view of (\ref{trans3}), for $n\le x\le n+1,$ we can write when $X_l+\lambda\ge 0.$
\begin{eqnarray}
\begin{split}
\left(\begin{array}{l}\psi_\lambda(x)\\
\frac{1}{\lambda}\psi_\lambda'(x)\end{array}\right)
=&
\left(\begin{array}{ll}\cos(\sqrt{\lambda+X_n}(x-n))&\frac{|\lambda|}{\sqrt{\lambda+X_n}}\sin(\sqrt{\lambda+X_n}(x-n))\\
-\frac{\sqrt{\lambda+X_n}}{\lambda}\sin(\sqrt{\lambda+X_n}(x-n))&\cos(\sqrt{\lambda+X_n}(x-n))\end{array}\right)\\\times&\left(\begin{array}{l}\psi_\lambda(n)\\
\frac{1}{\lambda}\psi_\lambda'(n)\end{array}\right)
\end{split}
\end{eqnarray}
from which it's clear that the vector 
$$\left(\begin{array}{l}\psi_\lambda(x)\\
\frac{1}{\lambda}\psi_\lambda'(x)\end{array}\right)$$
traces out a curve through a total angle of $\sqrt{\lambda+X_l}.$ That is
\begin{eqnarray*}
\theta_\lambda(n+1)=\theta_\lambda(n)+\sqrt{\lambda+X_n}.
\end{eqnarray*}
Thus,
\begin{eqnarray}\label{tksum}
\theta_\lambda(n)=\theta_\lambda(0)+\sum_{l=1}^n\sqrt{\lambda+X_l}
\end{eqnarray}
Due to the assumption on the distribution of the $\{X_l\}$ it follows that
\begin{eqnarray}
\lim_{n\to\infty}\frac{\theta_\lambda(n)}{\pi\,n^{1/\alpha}}\stackrel{\mathcal{L}}{=}\zeta_\alpha
\end{eqnarray}
with $\zeta_\alpha$ a random variable with an $St_\alpha$ distribution. This implies
\begin{eqnarray}
\lim_{n\to\infty}\frac{\theta_\lambda(n)}{\pi\,n}=\infty,\,a.s..
\end{eqnarray}
From (\ref{tksum}) it follows that $\theta_\lambda(n)\mod\pi$ has a nice density. Also, by the assumption on the distribution of $X_n$ it follows that $E[\ln^+||\mathcal{M}_\lambda(n)||]<\infty$ and so by standard results, $\gamma(\lambda)$ exists $P\, a.s.$ and is positive. From Kotani's results, \cite{K}, we can then conclude the spectrum is pure point $a.s.$ for $a.e.\,\,\theta_0$ since the resolvent in this case is in $L^2.$ The growth rate for the eigenfunctions also follows by Kotani's method.
\end{proof}
\begin{proof}
We now prove Theorem (\ref{MIII}) for $(\bf{Model\,\,III}).$
Since the random variables $Y_l$ possess a density, the Ricatti equation (\ref{ricatti}) implies that the distribution of the phase $\tk$ has a density (note that the distribution of $\bk Y_l\mod \pi$ has a bounded density) and since $\mk([0,L_n])=\Ck(Y_n)\cdots\Ck(Y_2) \Ck(Y_1)$, with $E[\ln^+|| \Ck(Y_1)||]<\infty,$ the Furstenberg Theorem holds:
\[\lim_{n\to\infty}\frac{1}{n}\ln||\mk([0,L_n])||=\gk_{nl}>0,\,P-a.s..\] 
The Lyapunov exponent with respect to the linear scale then satisfies
\begin{eqnarray}
\gk=\lim_{n\to\infty} \frac{1}{n}\ln ||\mk([0,n])||=0,\,a.s..
\end{eqnarray}
This follows easily from the fact that $L(n)\sim n^{1/\alpha},$
\begin{eqnarray*}
\lim_{n\to\infty} \frac{1}{n}\ln ||\mk([0,n])||=&\lim_{n\to\infty} \frac{\ln ||\mk([0,n])||}{\ln ||\mk([0,L_n])||} \frac{\ln ||\mk([0,L_n])||}{n}\\
=&\gk_{nl}\lim_{n\to\infty} \frac{\ln ||\mk([0,n])||}{\ln||\mk([0.L_n])||} \\
=&0.
\end{eqnarray*}
Finally, observe that
\begin{eqnarray*}
\lim_{n\to\infty} \frac{\ln ||\mk([0,L_n])||}{L_n^\alpha}=&\lim_{n\to\infty} \frac{\ln ||\mk([0,L_n])||}{n}\frac{n}{L_n^\alpha}\\
=&\frac{\gk_{nl}}{\zeta_\alpha^\alpha},
\end{eqnarray*}
where $\zeta_\alpha$ again has a $St_\alpha$ distribution.

We consider now the asymptotic behavior of the rotation number $\tk(n).$
By the Ricatti equation (\ref{ricatti}),  across the $n^{th}$ gap the rotation is
\[\tk(L_n-1)-\tk(L_{n-1})=\bk\,Y_n,\]
whereas across the $n^{th}$ bump the rotation is
\[\tk(L_n)-\tk(L_n-1)=O(1).\]
Thus,
\begin{eqnarray}\label{rot3}
\tk(L_n)=\bk\sum_{j=1}^n Y_j+O(n)
\end{eqnarray}
which implies by convergence to the stable law $St_\alpha$
\[\lim_{n\to\infty}\frac{\tk(L_n)}{n^{1/\alpha}}=\bk \zeta_\alpha.\]
But also from (\ref{rot3}) follows
\[\lim_{n\to\infty}\frac{\tk(L_n)}{L_n}=\bk\]
and consequently,
\[N(\bk^2)=\frac{\bk}{\pi}.\]

We turn now to consideration of the spectrum.  Since $\ln||M_\bk([0,L_n])||\sim {\gk} n, $ and by (\ref{ricatti}) the magnitude of $r_\bk(x)$ is constant across the gaps, there are two so-called Weil solutions on $[0,\infty),$  one of which, $\psi_\bk^+,$ has the maximal rate of growth i.e.
 \begin{eqnarray}
\psi_\bk^+(L_n)\le C e^{(\gk+\epsilon)n},\,n\ge n_0(\omega),\, P-a.s..
\end{eqnarray}
The second solution, $\psi_\bk^-,$  is exponentially decreasing, i.e.
\begin{eqnarray}
\psi_\bk^-(L_n)\le C e^{(-\gk+\epsilon)n},\,n\ge n_0(\omega), \, P-a.s..
\end{eqnarray}
Note that by the Ricatti equation (\ref{ricatti}), the function $r_\bk$ is constant on the intervals between bumps.
The Green function, $R_{\bk+i0}(0,L_n)$ in this case, is decreasing exponentially with rate at least $\gk-\epsilon.$
Thus, $P-a.s.,$
 \begin{eqnarray}\label{R-est}
 \int_0^\infty |R_{\bk+i0}(0,x)|^2dx\le \sum_{n=0}^\infty Y_ne^{-2(\gk-\epsilon)n}+O(1).
 \end{eqnarray} 
 The fluctuations of the sequence $Y_n$ can be estimated by noting 
 \[P(Y_n>n^{1/\alpha} \ln^{2/\alpha}n)\sim \frac{c}{n\ln^2 n},\]
 so that by Borel-Cantelli,
  \[P(Y_n>n^{1/\alpha} \ln^{2/\alpha}n,\,i.o.)=0.\]
 This implies by (\ref{R-est}) that  $a.s.\,R_{\bk+i0}(0,\cdot)\in L^2([0,\infty)).$ Using this with Kotani's result and the absolutely continuous distribution of the phase $\tk(S_n),$ give the localization result, namely, for a.e. $\theta_0,$ the spectrum of $H^{\theta_0}$ is pure point  $a.s..$
  \end{proof}

\begin{remark}
We wish to stress again that the fact that the spectrum in ${\bf{Model\,III}}$ is pure point while the Lyapunov exponent is identically zero doesn't contradict the "Kotani Theory." The reason of course is that the point potential in ${\bf{Model\,III}}$ is not ergodic.
\end{remark}



\end{document}